\definecolor{mygreen}{rgb}{0,0.6,0}
\definecolor{mygray}{rgb}{0.5,0.5,0.5}
\definecolor{mymauve}{rgb}{0.58,0,0.82}
\tiny\color{mygray}, 
\definecolor{labelkey}{rgb}{0,0.08,0.45}
\definecolor{refkey}{rgb}{0,0.6,0.0}
\definecolor{Brown}{rgb}{0.45,0.0,0.05}
\definecolor{dgreen}{rgb}{0.00,0.49,0.00}
\definecolor{dblue}{rgb}{0,0.08,0.75}
\DeclareSymbolFont{largesymbolsA}{U}{jkpexa}{m}{n}
\DeclareMathSymbol{\varprod}{\mathop}{largesymbolsA}{16}
\definecolor{labelkey}{rgb}{0,0.08,0.45}
\definecolor{refkey}{rgb}{0,0.6,0.0}
\definecolor{Brown}{rgb}{0.45,0.0,0.05}
\definecolor{dgreen}{rgb}{0.00,0.49,0.00}
\definecolor{dblue}{rgb}{0,0.08,0.75}
\newtheorem{theorem}{Theorem}[section]
\newtheorem{corollary}[theorem]{Corollary}
\newtheorem{lemma}[theorem]{Lemma}
\newtheorem{assumption}[theorem]{Assumption}
\theoremstyle{definition}
\newtheorem{definition}[theorem]{Definition}
\newtheorem{remark}[theorem]{Remark}
\newtheorem{example}[theorem]{Example}
\def\acknowledgement{\par\addvspace{17pt}\small\rmfamily
\trivlist\if!\ackname!\item[]\else
\item[\hskip\labelsep
{\bfseries\ackname}]\fi}
\numberwithin{equation}{section}
\newcommand{\R}{\ensuremath \mathbb{R}}
\newcommand{\prox}{\ensuremath{\text{\sf prox}}}
\DeclareMathOperator*{\dom}{\ensuremath{\text{\rm dom}}}
\author{Cristian Vega}
\title{\sffamily Fast iterative regularization by reusing data
\footnote{Contact author: 
Cristian Vega Cere\~no, {\ttfamily cristian.vega.14@sansano.usm.cl},
}}
\author{Cristian Vega$^1$ 
\\[5mm]
\small MaLGa, DIMA, Universit\'a degli Studi di Genova \\
}
\date{\ttfamily \today}
\newcommand{\scal}[2]{{\left\langle{{#1}\mid{#2}}\right\rangle}}
\newcommand{\SSS}{\ensuremath{{\mathcal S}}}
\newcommand{\spn}{\ensuremath{\operatorname{span}}}
\newcommand{\Id}{\ensuremath{\operatorname{Id}}\,}
\newcommand{\RR}{\ensuremath{\mathbb{R}}}
\newcommand{\ran}{\ensuremath{\operatorname{ran}}}
\newcommand{\NN}{\ensuremath{\mathbb N}}
\newcommand{\Fix}{\ensuremath{\operatorname{Fix}}}
\newcommand{\ri}{\ensuremath{\operatorname{ri}}}
\newcommand{\vertiii}[1]{{\left\vert\kern-0.25ex\left\vert\kern-0.25ex\left\vert #1 
\right\vert\kern-0.25ex\right\vert\kern-0.25ex\right\vert}}
\newcommand{\normi}{\vert\kern-0.25ex\vert\kern-0.25ex\vert}
\ProvideTextCommand{\DJ}{OT1}{\raisebox{0.25ex}{-}\kern-0.4em D}
\numberwithin{equation}{section}
\definecolor{Mycolor1}{RGB}{193,124,250}
\newcommand{\dal}{u} 
\newcommand{\prim}{x} 
\newcommand{\proj}{p}
\newcommand{\prop}{v} 
\newcommand{\Dal}{U} 
\newcommand{\Prim}{X} 
\newcommand{\pSigma}{\Sigma}
\newcommand{\gamedio}{\Gamma^{\frac{1}{2}}}
\newcommand{\gamediomenos}{\Gamma^{-\frac{1}{2}}}
\newcommand{\gammmadot}{\Gamma}
\begin{document}

\title{ {\sffamily Fast iterative regularization by reusing data}}
\author{Cristian Vega\thanks{MaLGa, DIMA, Universit\'a degli Studi di Genova
        ({\tt cristian.vega@edu.unige.it}).}\ \ Cesare Molinari  \thanks{Istituto Italiano di Tecnologia, MaLGa, DIMA, Universit\'a degli Studi di Genova ({\tt molinari@dima.unige.it })} \\ 
           Lorenzo Rosasco  \thanks{Istituto Italiano di Tecnologia, MaLGa, DIBRIS, Universit\`a degli Studi di Genova, Center for Brains, Minds and Machines, Massachussets Institute of Technology ({\tt lorenzo.rosasco@unige.it })}\\  and Silvia Villa\thanks{MaLGa, DIMA, Universit\'a degli Studi di Genova ({\tt silvia.villa@unige.it}).}}
\date{\ttfamily \today}

\maketitle

\begin{abstract}
\hspace{-5mm}Discrete inverse problems correspond to solving a system of equations in a stable way with respect to noise in the data.  A typical approach to enforce uniqueness and select a meaningful solution is  to introduce a  regularizer. While for most applications the regularizer is  convex, in many cases it is not  smooth nor strongly convex. In this paper,  we propose and study two new iterative regularization methods, based on a primal-dual algorithm, to solve inverse problems efficiently.  Our analysis, in the noise free case, provides convergence rates for the Lagrangian and the feasibility gap. In the noisy case, it provides stability bounds and early-stopping rules with theoretical guarantees. The main novelty of our work is the exploitation of some a priori knowledge  about the solution set, i.e. redundant information. More precisely we show that the linear systems can be used more than once along the iteration. Despite the simplicity of the idea, we show that this procedure brings surprising advantages in the numerical applications. 
We discuss various approaches to take advantage of redundant information, that are at the same time consistent with our assumptions and flexible in the implementation. Finally, we illustrate our theoretical findings with numerical simulations for robust sparse recovery and  image reconstruction through total variation. We confirm the efficiency of the proposed procedures, comparing the results with state-of-the-art methods.

\end{abstract}

\vspace{1ex}
\noindent
{\bf\small Keywords.} {\small Primal-dual splitting algorithms, iterative regularization, early stopping, Landweber method, stability and convergence analysis.}\\[1ex]
\noindent
{\bf\small AMS Mathematics Subject Classification (2020):} {\small 90C25, 65K10,  49M29}

\section{Introduction}
Many applied problems require the estimation of a quantity of interest from noisy linear measurements, for  instance compressed sensing \cite{candes2006robust,candes2006near,donoho2006compressed,rudelson2005geometric,tsaig2006extensions}, image processing \cite{rudin1992nonlinear,osher1990feature,rudin1994total,chambolle2004algorithm,chambolle1997image,osher2005iterative,xiao2010dual}, matrix completion \cite{cai2010singular,candes2010matrix,candes2009exact,molinari2021iterative}, and various problems in machine learning \cite{shalev2014understanding,moulines2011non,rosasco2014learning,duchi2009efficient,bauer2007regularization,xiao2010dual,yao2007early}.
In all these problems, we are interested in finding stable solutions to an equation where the accessible data are corrupted by noise. This is classically achieved by regularization \cite{engl1996regularization}. The most classical procedure in the literature is Tikhonov (or variational) regularization \cite{engl1996regularization}, and consists in minimizing the sum of an error term on the residual of the equation plus a regularizer, which is explicitly added to the objective function. The regularizer entails some a priori knowledge or some desired property of the solutions that we want to select. A trade-off parameter is then introduced to balance the fidelity and the regularizer. In practice, this implies that the optimization problem has to be solved many times for different values of the parameter. Finally, a parameter - and the correspondent solution - is chosen accordingly to the performance with respect to some criteria, such as Morozov discrepancy principle \cite{engl1996regularization} or, popular technique in machine learning, cross-validation on left-out data \cite{steinwart2008support,golub1979generalized}. \\

An efficient alternative to explicit regularization is offered by iterative regularization, also known as implicit regularization \cite{engl1996regularization,burger2007error,boct2012iterative,bachmayr2009iterative}. The chosen regularizer is minimized under the constraint given by the equation, but with the available data affected by noise. A numerical algorithm to solve the optimization problem is chosen and early stopped, to avoid convergence to the noisy solution. Running the iterative procedure until convergence would give an undesired noisy solution. In this setting, the number of iterations plays the role of the regularization parameter. The best performing iterate, according to some a priori criterion (for instance, cross-validation), is then considered as the regularized solution. This procedure is very efficient when compared to explicit regularization, because it requires to solve only one optimization problem and not even until convergence.
\ \\

In this paper we are interested in iterative regularization procedures via early stopping. First we focus on linearly constrained minimization problems, when the regularizer is only convex, but not necessarily smooth nor strongly convex. The main novelty of this work is the design and analysis of two new iterative regularization methods based on primal-dual algorithms \cite{Chambolle_Pock11,Condat13,Vu13}, which perform one minimization step on the primal variable followed by one on the dual, to jointly solve the primal and the dual minimization problems. Primal-dual algorithms are computationally efficient, as only matrix-vector multiplications and the calculation of a proximity operator are required. In order to design our algorithms, we adapt the framework presented in \cite{briceno2021random} to the context of inverse problems. The key idea is to reuse data constraint at every iteration of the primal-dual algorithm, by activations of the redundant information available. The first method that we propose is a primal-dual algorithm (\ref{A: PDSP}) with additional activactions of the linear equations. We propose different variants of this procedure, depending on the extra activation step. For instance, we are able to exploit the data constraints more than once at every iteration via gradient descent, with fixed or adaptive step size. The second method is a dual-primal algorithm (\ref{A: DPSP}) where a subset containing the dual solutions is activated at each step. This subset is not affected by the noise in the data and is usually determined by a finite number of independent constraints. 

This formulation may seem artificial or inefficient. However, while maintaining an easy implementation, our methods achieve better numerical performances and considerable speed-ups with respect to the vanilla primal-dual algorithm. 
We extend to the noisy case the techniques studied in \cite{briceno2019projected,briceno2021random} for the exact case. The assumptions on the noise are the classical ones in inverse problems, see e.g. \cite{matet2017don,calatroni2021accelerated,burger2007error,molinari2021iterative}.
We generalize the results in  \cite{molinari2021iterative}, by including in the primal-dual procedure a diagonal preconditioning and an extra activation step. 
Since we are in a non-vanishing  noisy regime, it is not reasonable to expect the convergence of the iterations to the solution set of the noise free problem, thus we provide an early stopping criterion to recover a stable approximation of an ideal solution, in the same spirit of \cite{matet2017don,calatroni2021accelerated,burger2007error,raskutti2014early,zhang2005boosting,yao2007early,blanchard2010optimal,bartlett2007adaboost}.
The early stopping rule is derived from  theoretical  stability bounds and feasibility gap rates for both algorithms, obtaining implicit regularization  properties similar to those stated in \cite{molinari2021iterative} and \cite{matet2017don}. Theoretical results are complemented by numerical experiments for robust sparse recovery and total variation,  showing that state-of-the-art performances can be achieved with considerable computational speed-ups.\\

\textbf{Related works.} In this section, we briefly discuss the literature about variational  and iterative  regularization techniques.
Tikhonov regularization has been introduced in \cite{tihonov1963solution}. See  also \cite{engl1996regularization,benning2018modern} and references therein for an extensive treatment of the topic.  
The most famous iterative regularization method is the Landweber algorithm \cite{landweber1951iteration,engl1996regularization}, namely gradient descent on the least squares problem. Duality theory in optimization gives another interpretation which sheds light on the regularizing properties of this procedure. Indeed, consider the problem of minimizing the squared norm under the linear constraint. Running gradient descent on its dual problem and mapping back to the primal variable, we obtain exactly the Landweber method. This provides another explanation of why the iterates of Landweber algorithm converge to the minimal norm solution of the linear equation. Stochastic gradient descent on the previous problem is the generalization of the Kaczmarz method \cite{lorenz2008convergence,schlor19}, which consists in applying cyclic or random projections onto single equations of the linear system.   Accelerated and diagonal versions are also discussed in \cite{engl1996regularization,neubauer2017nesterov} and \cite{bakushinsky2005iterative,kaltenbacher2008iterative,scherzer1998modified}, respectively. 
The regularization properties of other optimization algorithms for more general regularizers have been also studied. If strong convexity  is assumed, mirror descent \cite{beck2003mirror,nemirovskij1983problem} can also be interpreted as gradient descent on the dual problem, and its regularization properties (and those of its accelerated variant) have been studied in  \cite{matet2017don}. Diagonal approaches \cite{bahraoui1994convergence} with a regularization parameter that vanishes along the iterations  have been studied in \cite{garrigos2018iterative}, see  \cite{calatroni2021accelerated} for an accelerated version.  Another common approach relies on the linearized Bregman iteration \cite{yin2008bregman,yin2010analysis, xiao2010dual, osher2005iterative}, which has found applications in compressed sensing \cite{cai2009linearized,osher2010fast,yin2008bregman}  and image deblurring \cite{cai2009linearized}. However, this method requires to solve non-trivial minimization problems at each iteration. For convex, but not strongly convex regularizers, the regularization properties of a primal-dual algorithms have been investigated in \cite{molinari2021iterative}.
\ \\

The rest of the paper is organized as follows. In  Section~\ref{sec:NB} we introduce the notation jointly with its mathematical background. In Section~\ref{s: MPA} we present the main problem and propose five classes of algorithms to solve it numerically. In Section~\ref{s: MR} we derive stability and feasibility gap bounds and related early stopping rules. In Section~\ref{s: app}  we verify the performance of the algorithm on two numerical applications: robust sparse recovery problem and image reconstruction by total variation. Finally, we provide some conclusions.
\section{Notation and background}
\label{sec:NB}
First we recall some well known concepts and properties used in the paper.
\ \\

Let $X$, $Y$ be two finite-dimensional real vector spaces equipped with an inner product $\scal{\cdot}{\cdot}$ and the induced norm $\|\cdot\|^2$. We denote the set of convex, lower semicontinuous, and proper functions on $X$ by $\Gamma_{0}(X)$. The subdifferential of $F\in \Gamma_{0}(X)$ is the set-valued operator defined by
\begin{align}
\partial F\colon \  X\to 2^{X}, \quad
x\mapsto\{u\in X\hspace{0.2cm}|\hspace{0.2cm}(\forall 
y\in X)\hspace{0.2cm} F(x)+\langle y-x\mid\hspace{0mm} 
u\rangle\leq F(y)\}. \label{d: subdifferential}
\end{align}

If the function $F$ is G\^ateaux differentiable at the point $x$, then $\partial F(x)=\{\nabla F(x)\}$\hspace{2mm}\cite[Proposition 17.31 (i)]{bauschke2011convex}. In general, for $F\in 
\Gamma_{0}(X)$, it holds that $(\partial F)^{-1}=\partial F^{*}$ \hspace{2mm}\cite[Corollary 16.30]{bauschke2011convex}, where $F^{*}\in \Gamma_{0}(X)$ is the conjugate function of $F$, defined by $F^{*}(x):=\sup _{u \in 
X} \ \scal{x}{u}- F(u)$. 
\ \\

For every self-adjoint positive definite matrix $\pSigma$, we define the proximity operator of $F$ relative to the metric induced by $\|\cdot\|_{\pSigma}^2:=\scal{\cdot}{\pSigma \cdot}$ as $\operatorname{prox}^{\pSigma}_{F}=(\Id+\pSigma\partial F)^{-1}$. If $\pSigma=\sigma\Id$ for some real number $\sigma>0$, it is customary to write $\prox_{\sigma F}$ rather than $\operatorname{prox}^{\pSigma}_{F}$ . The projector operator onto a nonempty closed convex set $C \subseteq X$ is denoted by $P_{C}$. If we define the indicator $\iota_{C}\in\Gamma_{0}(X)$ as the function that is $0$ if $x$ on $C$ and $+\infty$ otherwise, then $\prox_{\iota_{C}}=P_{C}$. Moreover, if $C$ is a singleton, say $C=\{b\}$, we have that $\iota^{*}_{\{b\}}(u)=\scal{u}{b}$. The relative interior of $C$ is $\ri(C)=\left\{ x\in C\mid \RR_{++} (C-x)= \spn (C-x)\right\},$ where $\RR_{++}C=\left\{\lambda y\mid (\lambda >0)\wedge(y\in 
C)\right\}$ and $\spn(C)$ is the smallest linear subspace of $X$ containing $C$. 
\ \\

Given $\alpha~\in~]0, 1[ $, an operator $T : \ X \rightarrow 
X$ is $\alpha$-averaged non-expansive iff
$$(\forall x\in X )(\forall y\in X)\hspace{3mm} \|T 
x - Ty\|^2 \leq \|x - y\|^2 -\frac{1-\alpha}{\alpha}\|(\Id-T)x- (\Id-T)y\|^2,$$ and it is quasi-non-expansive iff:
$$(\forall x\in X )(\forall y\in \Fix T)\hspace{3mm} \|T 
x - y\|^2 \leq \|x - y\|^2,$$  where the set of fixed points of $T$ is defined by $\Fix T=\{x\in X\mid Tx=x\}$. For further results on convex analysis and operator theory, the reader is referred to \cite{bauschke2011convex}.
\ \\

For a real matrix $A\in\RR^{d\times p}$, its operator norm is denoted by $\|A\|$ and its adjoint by $A^{*}$. We define the Frobenius norm of $A$ as $\|A\|^{2}_{F}:=\sum_{i=1}^{d}\|a_{i}\|^2$, where, for every $i\in[d]:=\{1,\ldots,d\}$, $a_{i}$ denotes the $i$-th row of $A$. We also denote by $A_i$ the $i$-th column of $A$. We denote by $\ran(A)$ and $\ker(A)$ the range and the kernel of $A$, respectively.

\section{Main problem and algorithm}
\label{s: MPA}
Many applied problems require to estimate a quantity of interest $x\in\RR^p$ based on linear measurements $b=Ax$, for some matrix $A\in\RR^{d \times p}$. For simplicity, we carry the analysis in this finite dimensional case, but note that it can be easily extended to the infinite dimensional setting. A standard approach to obtain the desired solution is to assume that it is a minimizer of the following linearly constrained optimization problem:
\begin{align}
\min_{x\in \RR^p}   J(x)	\hspace{4mm}
\text{s.t.}				\hspace{4mm} Ax=b,
 \tag{$\mathcal{P}$}
\label{P: problem}
\end{align}
where $J\in\Gamma_{0}(\RR^p)$ encodes  a priori information on the solution and is usually hand-crafted. Typical choices are:  the squared norm \cite{engl1996regularization}; the elastic net regularization \cite{matet2017don}; the $\ell^{1}$-norm \cite{candes2006robust,candes2006near,donoho2006compressed,tsaig2006extensions}; the total variation \cite{rudin1992nonlinear,osher1990feature,rudin1994total,chambolle2004algorithm}. Note that, in the previous examples, the first two regularizers are strongly convex, while the second two are just convex and non-smooth.
 \\ \\ If we use the indicator function of  $\{b\}$, \eqref{P: problem} can be written equivalently as
 \begin{align}
        \min_{x\in \RR^p}  J(x)+\iota_{\{b\}}(Ax).
        \label{P: Pc}
    \end{align}
 
 We denote by $\mu$ the optimal value of $\eqref{P: problem}$ and by $\SSS$ the set of its minimizers. We assume that $\SSS\neq\emptyset$. In order to build our regularization procedure, we consider the Lagrangian functional for problem $\eqref{P: problem}$:
    \begin{equation}
     \label{e:saddle point}
     \mathcal{L}(\prim,\dal):=J(\prim)+\scal{\dal}{A\prim-b}.
     \end{equation}
This approach allow us to split the contribution of the non-smooth term $J$ and the one of the linear operator $A$, without requiring to compute the projection on the set $C:=\{x\in\RR^{p}\mid Ax=b\}$. We define the set of saddle points of $\mathcal{L}$ as
     \begin{equation}
         \mathcal{Z}=\left\{(\prim, \dal)\in \RR^p\times\RR^d: \ \mathcal{L}(\prim,v)\leq \mathcal{L}(\prim,\dal)\leq \mathcal{L}(y,\dal) \ \ \forall(y,v)\in \RR^p\times\RR^d \right\}. 
   \end{equation}
The set $\mathcal{Z}$ is characterized by the first-order optimality condition:
     \begin{align}
      \mathcal{Z}= \left\{(x,u)\in \RR^p\times\RR^d:   0\in\partial J(x)+A^{*}u\hspace{2mm}\text{ and }\hspace{2mm}Ax=b\right\}.
     \end{align}
 In the following, we always assume that $\mathcal{Z}\neq \emptyset.$ 
 \ \\
 
\begin{remark}[Saddle points and primal-dual solutions] The set of saddle points is ensured to be nonempty 
when some qualification condition holds (see \cite[Proposition 6.19]{bauschke2011convex} special cases), for instance when 
\begin{align}
b\in \ri\left(A\left(\dom J\right)\right).
\label{e: qualication conditions}
\end{align}
Observe that the objective function of \eqref{P: problem} is the sum of two functions in $\Gamma_{0}(\RR^p)$  where one of the two is composed with a linear operator.  This formulation is suitable to apply  Fenchel-Rockafellar duality. Recalling that $\iota^{*}_{\{b\}}(u)=\scal{u}{b}$ \cite[Example 13.3(i)]{bauschke2011convex}, the dual problem of \eqref{P: problem} is given by 
    \begin{align}
        \min_{u\in \RR^d}  J^{*}(-A^*u)+\scal{u}{b}.
        \label{P: Pd}
        \tag{$\mathcal{D}$}
    \end{align}
We denote its optimal value  by $\mu_{*}$ and by $\SSS^{*}$ its set of minimizers. Then, $\mathcal{Z}\subseteq\SSS\times \SSS^{*}$, and equality holds if  \eqref{e: qualication conditions} is satisfied \cite[Proposition 19.21 (v)]{bauschke2011convex}.\ \\

In addition, condition \eqref{e: qualication conditions}   implies that problem \eqref{P: Pd} has a solution. Then under the qualification condition, since we assumed that  $S\neq\emptyset$, we derive also that $\mathcal{Z}\neq \emptyset$. 
\end{remark}
 \ \\ 
 
In practical situations, the exact data $b$ is unknown and only a noisy version is accessible. 
Given a noise level $\delta\geq0$, we consider a worst case scenario, where the error is deterministic and the accessible data $b^\delta$ is such that \begin{equation}
    \|b^{\delta}-b\|\leq\delta.
\end{equation} This is the classical model in inverse problems \cite{engl1996regularization,kaltenbacher2008iterative}. The solution set of the inexact linear system $Ax=b^{\delta}$ is denoted by $C_{\delta}$. Analogously,  we denote by $\SSS_\delta$ and $\SSS_\delta^*$ the set of primal and dual solutions with noisy data.  It is worth pointing out that, if $b^\delta\not\in\ran(A)$, then $\SSS_\delta\subseteq C_{\delta}= \emptyset$ but our analysis  and bounds still hold. 

\subsection{Primal-Dual Splittings with a priori Information}\label{s:pd}
In this section, we propose an iterative regularization procedure to solve problem \eqref{P: problem}, based on a primal-dual algorithm with preconditioning and arbitrary activations of a predefined set of operators. 
While the use of primal-dual algorithms \cite{chambolle2011first} as iterative regularization methods is somewhat established \cite{molinari2021iterative}, in this paper we focus on the possibility of reusing the data constraints during the iterations. This idea was originally introduced in \cite{briceno2021random}, where the authors studied the case in which the exact data is available, and consists in the activation of extra operators, that encode information about the solution set, to improve the feasibility of the updates. In our setting, we can reuse data constraints, and we project, in series or in parallel, onto some equations given by the (noisy) linear  constraint. But we will show that other interesting choices are possible, as projections onto the set of dual constraints. 
\ \\

More formally, for $i\in [m]$, we consider a finite number of operators $T_i\colon \ \RR^p\to \RR^p$ or $T_i\colon \ \RR^d\to \RR^d$, such that the set of noisy primal solutions is contained in $\Fix T_i$ for every $i\in [m]$. We refer to this as a redundant a priori information. A list of  operators suitable to our setting (and with practical implementation) can be found in Section~\ref{s: app}.
\ \\

The primal-dual algorithms with reuse of constraints which are given in  Table~\ref{t:algos} are a preconditioned and deterministic version of the one proposed in \cite{briceno2021random} applied to the case of linearly constrained minimization.
\begin{table}[ht!]
\label{t:algos}
    \centering \resizebox{0.8\columnwidth}{!}{
    \begin{tabular}{c c}
\hspace{-5mm} \begin{tabular}{|m{65mm}|}
          \hline   Primal-Dual splitting with activations  \\  \hline\vspace{2mm} \textbf{Input}:  $(\bar{\proj}^0,\proj^0,\dal^0)\in\RR^{2p}\times\RR^{d}$.\\\vspace{-0mm} \begin{flushleft}\textbf{For} $k=1,\ldots,\text{N:}$\end{flushleft}\vspace{-4mm}\\ \vspace{-10mm}
        \begin{align}
    \begin{array}{l}
\dal^{k+1}= \dal^k+\Gamma( A\bar{\proj}^k-b^\delta)\\
\prim^{k+1}=\prox^{\pSigma }_{J}(\proj^k-\pSigma A^*\dal^{k+1})\\
\proj^{k+1}=T_{\epsilon_{k+1}}\prim^{k+1}\\
\bar{\proj}^{k+1}=
\proj^{k+1}+ \prim^{k+1}-\proj^{k},
\end{array}  \label{A: PDSP}\tag{PDA}\end{align}\vspace{-4mm}\\ \vspace{0mm}\begin{flushleft}\textbf{End}\end{flushleft}  \\  \hline
      \end{tabular} 
     & \hspace{-4mm} \begin{tabular}{|m{65mm}|}
          \hline   Dual-Primal splitting with activations  \\  \hline\vspace{2mm} \textbf{Input}: $(\prim^{0},\bar{\prop}^{0},\dal^0)\in \RR^{p}\times\RR^{2d}$.\\\vspace{-0mm} \begin{flushleft}\textbf{For} $k=1,\ldots,\text{N:}$\end{flushleft}\vspace{-4mm}\\ \vspace{-10mm}
        \begin{align}
    \begin{array}{l}
\prim^{k+1}=\prox^{\pSigma }_{J}(\prim^k-\pSigma A^*\Bar{\prop}^{k})\\
\dal^{k+1}= \prop^k+\Gamma( A\prim^{k+1}-b^\delta)\\
\prop^{k+1}=T_{\epsilon_{k+1}}\dal^{k+1}\\
\bar{\prop}^{k+1}=
\prop^{k+1}+ \dal^{k+1}-\prop^{k},
\end{array} \label{A: DPSP}\tag{DPA}\end{align}\vspace{-4mm}\\ \vspace{0mm}\begin{flushleft}\textbf{End}\end{flushleft}  \\  \hline
      \end{tabular}
 \\
    \end{tabular}}\caption{Proposed algorithms for iterative regularization.} \label{T:Alg}
    \end{table}
We first focus on the Primal-Dual splitting. It is composed by four different steps, to be performed in series. The first step is the update of the dual variable, in which the residuals to the linear equation $Ax=b^\delta$ are accumulated after preconditioning by the operator $\Gamma$. The second step is an implicit prox-step, with function $J$ and norm $\|\cdot\|_{\pSigma^{-1}}$, on the primal variable. The third one is the activation of the operator related to reusing data constraint, on the primal variable. Finally, the last step is an extrapolation again on the primal variable. Notice that, if no operator is activated, it corresponds simply to $\bar{\proj}^{k+1}=
2 \prim^{k+1}-\prim^{k}$, that is the classical update in primal-dual algorithm. On the other hand, the Dual-Primal Splitting algorithm, except for permutation in the order of the steps, differs from the previous one because the activation of the operator is done not on the primal variable but on the dual one. Indeed, Lemma \ref{L: PD=DP} establishes that, without the activation of the operator, there is an equivalence between the primal variables generated by \ref{A: PDSP} and the ones generated by \ref{A: DPSP}. \\

\begin{remark}
As already mentioned, our analysis can be easily extended to infinite dimensional problems. In particular, note that the primal-dual algorithms above can be formulated exactly in the same way for infinite dimensional problems. The convergence guarantees of the plain methods in Hilbert and Banach spaces have been studied in \cite{Condat13,Vu13,silveti2021stochastic}. 

Another possible extension of the algorithm, that we do not analyse explicitly in this work, is related with the stochastic version of primal-dual; see \cite{chambolle2018stochastic,alacaoglu2019convergence,gutierrez2021convergence}. On the other hand, note that in \eqref{A: PDSP} the redundant activation of the data constraint is arbitrary. In particular, it can be chosen in a stochastic way at every iteration.

\end{remark}

\ \\
In the following, we list the assumptions that we require on the parameters and the operators involved in the algorithm. 

\begin{assumption}\label{A: structured error1} 
Consider the setting of \ref{A: PDSP} or \ref{A: DPSP}:
\begin{enumerate}
\item[($A1$)]\label{A: structured error1a}  The preconditioners $\pSigma\in\RR^{p\times p}$ and $\Gamma\in\RR^{d\times d}$ are two diagonal positive definite matrices such that
\begin{align}
    0<\alpha:=1-\|\gamedio A\pSigma^{\frac{1}{2}}\|^2.
\label{c: ConditionL D1}
 \end{align}
\item[($A2$)]\label{A: structured error1b} For every $k\in\mathbb{N}$, $\epsilon_k\in[m]$.
\end{enumerate}
Consider the setting of \ref{A: PDSP}: 
\begin{enumerate}
\item[($A3$)]\label{A: structured error2} $\left\{T_{i}\right\}_{i\in [m]}$ is a family of operators from $\RR^{p}$ to $\RR^{p}$ and  for every $i\in [m]$:
\begin{enumerate}
\item $\Fix T_{i}\supseteq\SSS_{\delta} \supseteq\emptyset $;
\item there exist  $e_i\geq 0$ such that, for every $\prim\in\RR^p$ and $\bar{\prim}\in \SSS$,
\begin{align}
\label{A: pitagoras error 1}
    \|T_i\prim-\bar{\prim}\|_{\pSigma^{-1}}^2\leq \|\prim-\bar{\prim}\|_{\pSigma^{-1}}^2+e_i\delta^{2}.\end{align} We denote by $e=\max_{i\in[m]} e_i$.
\end{enumerate} \label{c: ConditionL D3}
\end{enumerate}

Now consider the setting of \ref{A: DPSP}:
\begin{enumerate}
\item[($A4$)\label{A: structured error3}] $\left\{T_{i}\right\}_{i\in [m]}$ is a family of operators from $\RR^{d}$ to $\RR^{d}$ and  for every $i\in [m]$:
\begin{enumerate}
\item   $\Fix T_{i}\supseteq\SSS^{*}_{\delta}\supseteq \Fix T_{i}\emptyset$;
\item   for every $u\in\RR^d$ and $\bar{u}\in \SSS^{*}_{\delta}$,
\begin{align}
\label{A: pitagoras error 2}
    \|T_iu-\bar{u}\|_{\Gamma^{-1}}^2\leq \|u-\bar{u}\|_{\Gamma^{-1}}^2.\end{align} 
\end{enumerate} \label{c: ConditionL D4}
\end{enumerate}
\end{assumption}
\begin{remark}[Hypothesis about the operators]
If Assumptions A3-(a) holds and $\delta=0$, Assumptions A3-(b) is implied by quasi-nonexpansivity of $T_i$ on $\SSS$. The previous is a weaker condition than the one proposed in \cite{briceno2021random}, where, due to the generality of the setting,  $\alpha$-averaged non-expansive operators are needed. A similar reasoning applies to Assumption A4.
\end{remark} 

\section{Main results}
\label{s: MR}
In this section, we present and discuss the main results of the paper. We derive stability properties of primal-dual and dual-primal splitting  for linearly constrained optimization with a priori information.  
\ \\

First, we define the averaged iterates and the square weighted norm induced by  $\pSigma$ and $\Gamma$ on $\RR^p\times\RR^d$, namely
\begin{align}
    \left(\Prim^n,\Dal^n\right):=\frac{\sum_{k=1}^{n}z^{k}}{n}\hspace{1mm} \text{ and }\hspace{1mm}  V(z):=\frac{\|\prim\|_{\pSigma^{-1}}^2}{2}+\frac{\|\dal\|_{\Gamma^{-1}}^2}{2},
    \label{D: Wnorm}
\end{align} where $z^{k}:=(\prim^{k},\dal^{k})$ is the $k$-th iterate and $z:=(\prim,\dal)$ is a primal-dual variable. We also recall the the definition of the Lagrangian as $\mathcal{L}(\prim,\dal):=J(\prim)+\scal{\dal}{A\prim-b}$

The first result establishes the stability properties of algorithm \ref{A: PDSP}, both in terms of Lagrangian and feasibility gap. We recall that here we use activation operators based on the noisy feasibility constraints in the primal space, namely the set $C_\delta$.
 \begin{theorem}\textbf{}
    \label{Th:PPD}Consider the setting of \ref{A: PDSP} under Assumptions A1, A2, and A3.  Let $(\bar{\proj}^0,\proj^{0},\prim^{0})\in\RR^{2p}\times\RR^{d}$ be such that  $\proj^0=\bar{\proj}^{0}$.  Then, for every  $z~=~(\prim,\dal)~\in~\mathcal{Z}$ and for every $N\in\NN$, we have 
     \begin{align}
        \mathcal{L}\left(\Prim^{N},\dal\right)- \mathcal{L}\left(\prim,\Dal^{N}\right)\leq& \frac{V(z^{0}-z)}{N}+\frac{2N\|\Gamma^{\frac{1}{2}}\|^2\delta^{2}}{\alpha}+\delta\|\Gamma^{\frac{1}{2}}\|\left(\frac{2 V(z^{0}-z)}{\alpha}\right)^{\frac{1}{2}}\nonumber\\ &+\delta\|\Gamma^{\frac{1}{2}}\|\left(\frac{ N e\delta^2}{\alpha}\right)^{\frac{1}{2}}+\frac{e\delta^2}{2} \hspace{2mm}\label{e: DG}
    \end{align}    and
    \begin{align}
            \|A\Prim^N-b\|^2\leq&\frac{16N\|\Gamma\|\|\Gamma^{-1}\|\delta^{2}}{\alpha^{2}}+8\delta\|\Gamma^{-1}\|\left(\frac{2\|\Gamma\| V(z^{0}-z)}{\alpha^3}\right)^{\frac{1}{2}}+8\delta^{2}\|\Gamma^{-1}\|\left(\frac{ \|\Gamma\|e N}{\alpha^3}\right)^{\frac{1}{2}\nonumber}\\
         &+\frac{8\|\Gamma^{-1}\|V(z^{0}-z)}{N\alpha}+2\delta^{2}+\frac{4\|\Gamma^{-1}\|e\delta^2}{\alpha}, \label{e: RN}
    \end{align}
    where we recall that the constants $\alpha$ and $e$ are defined in Assumptions A1 and A3, respectively.
    \end{theorem}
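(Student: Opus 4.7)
The plan is to follow the classical primal--dual analysis of Chambolle--Pock, adapted to incorporate both the data noise and the activation step $p^{k+1}=T_{\epsilon_{k+1}}x^{k+1}$. The core of the argument is a per-iteration descent inequality for the shifted Lagrangian gap $\mathcal{L}(x^{k+1},u)-\mathcal{L}(x,u^{k+1})$ around any saddle point $z=(x,u)\in\mathcal{Z}$, which is then telescoped over $k=0,\dots,N-1$ and averaged via Jensen's inequality. Since $(x,u)$ is a saddle point for the noise-free Lagrangian, the noise will appear as additive residual terms at each iteration.

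First, from the prox step in \eqref{A: PDSP} one has the inclusion $\Sigma^{-1}(p^k-x^{k+1})-A^{*}u^{k+1}\in\partial J(x^{k+1})$. Testing it against $x^{k+1}-x$ and using convexity of $J$ yields
$J(x^{k+1})-J(x)\le \langle \Sigma^{-1}(p^k-x^{k+1}),x^{k+1}-x\rangle-\langle u^{k+1},A(x^{k+1}-x)\rangle$. I would rewrite the first inner product via the three-point identity in the $\Sigma^{-1}$-norm, turning it into a telescoping difference of $\|p^k-x\|_{\Sigma^{-1}}^2$-type terms plus a residual. The dual update is rewritten as $\Gamma^{-1}(u^{k+1}-u^k)=A\bar p^k-b^\delta$, so testing against $u^{k+1}-u$ produces telescoping $\|u^{k}-u\|_{\Gamma^{-1}}^2$ terms. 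Noise enters here by splitting $A\bar p^k-b^\delta=A\bar p^k-b+(b-b^\delta)$, generating a deterministic noise contribution bounded by $\|u^{k+1}-u\|\|\Gamma^{1/2}\|\delta$ at each step.

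The cross term $\langle u^{k+1}-u,A(\bar p^k-x^{k+1})\rangle$ is then handled using the extrapolation $\bar p^k=p^k+x^k-p^{k-1}$: this is the standard device that makes all inner cross terms telescope and leaves a residual controlled by $\|\Gamma^{1/2} A \Sigma^{1/2}\|^{2}\,\|x^{k+1}-p^{k}\|_{\Sigma^{-1}}^{2}$, which is absorbed thanks to Assumption A1 and its constant $\alpha$. To pass from the $\|x^{k+1}-x\|_{\Sigma^{-1}}^2$ that appears naturally in the telescope to the $\|p^{k+1}-x\|_{\Sigma^{-1}}^2$ that is needed for the next step, I invoke Assumption A3(b), which produces an extra $e\delta^2$ per iteration. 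After summing and applying Jensen's inequality on $\mathcal{L}(\cdot,u)$ and $-\mathcal{L}(x,\cdot)$, the mixed $\delta\sqrt{N}$ terms in \eqref{e: DG} arise from Cauchy--Schwarz applied to $\sum_k\langle u^{k+1}-u,b-b^\delta\rangle_{\Gamma^{1/2}}$ combined with the quadratic bound on $\sum_k\|u^{k+1}-u\|_{\Gamma^{-1}}^2$ coming from the telescope; the analogous cross term $\delta\|\Gamma^{1/2}\|(Ne\delta^2/\alpha)^{1/2}$ arises from the analogous pairing with the activation residuals.

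For the feasibility bound \eqref{e: RN}, the approach is to apply the just-proven Lagrangian inequality to the perturbed saddle point $(x,u+\rho w)$ where $w=(AX^N-b)/\|AX^N-b\|$ and $\rho>0$ is a free parameter. Since $(x,u)$ is a saddle point of the noise-free Lagrangian, the difference $\mathcal{L}(X^N,u)-\mathcal{L}(x,U^N)$ is nonnegative, so the left-hand side is bounded below by $\rho\|AX^N-b\|$. Expanding $V(z^0-z-(0,\rho w))$ produces a term affine-plus-quadratic in $\rho$, and bounding $\|w\|_{\Gamma^{-1}}^{2}\le\|\Gamma^{-1}\|$ together with $\sqrt{a+b}\le\sqrt{a}+\sqrt{b}$ keeps the dependence on $\rho$ tractable. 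Dividing by $\rho$, choosing $\rho$ to balance the $O(\rho/N)$ quadratic residual against the $O(1/\rho)$ remainder (equivalently, taking $\rho^2$ proportional to $N$), and squaring yields \eqref{e: RN}; this is the mechanism that produces the $\|\Gamma\|\|\Gamma^{-1}\|$ factor in the leading noise term. The main obstacle is bookkeeping: ensuring that the activation residuals $e\delta^2$ and the data residuals $\delta^2$ combine without degrading the rates, which forces a careful application of Young's inequality with weights chosen to preserve the $\alpha^{-1}$ and $\alpha^{-3/2}$ factors in the final bounds.
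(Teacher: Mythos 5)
Your outline of the Lagrangian bound \eqref{e: DG} follows essentially the same path as the paper: the prox inclusion tested against $x-x^{k+1}$, the three-point identities in the $\pSigma^{-1}$ and $\Gamma^{-1}$ metrics, the extrapolation $\bar{\proj}^k=\proj^k+\prim^k-\proj^{k-1}$ to telescope the cross terms with the residual absorbed by $\alpha$ from Assumption A1, and Assumption A3(b) to trade $\|\proj^{k}-\prim\|_{\pSigma^{-1}}^2$ for $\|\prim^{k}-\prim\|_{\pSigma^{-1}}^2+e\delta^2$. One point you gloss over: after telescoping, the inequality controlling $\|\dal^{N}-\dal\|_{\Gamma^{-1}}^2$ has $\delta\sum_{k\le N}\|\dal^k-\dal\|_{\Gamma^{-1}}$ on its right-hand side, so it is self-referential; a single application of Cauchy--Schwarz does not close it. The paper resolves this with the discrete Bihari/Gronwall lemma of Rasch--Chambolle (applied to the partial sums), and you would need that lemma or an equivalent induction at this point.

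For the feasibility bound \eqref{e: RN} you take a genuinely different route. The paper never perturbs the dual variable: it keeps the quadratic residuals $\tfrac12\|\dal^{k+1}-\dal^k\|_{\Gamma^{-1}}^2=\tfrac12\|\gamedio(A\bar{\proj}^k-b^\delta)\|^2$ produced by the dual update inside the energy estimate, recombines them with the cross terms into an explicit sum of squares $\sum_k\tfrac12\|\gamedio(A\proj^k-b^\delta)\|^2$, lower-bounds this by $\tfrac{\alpha}{4}\sum_k\|\gamedio(A\prim^{k}-b^\delta)\|^2$ and then by $\sum_k\|A\prim^k-b\|^2$ up to $\|\Gamma^{-1}\|$ and $\delta^2$ corrections, and concludes by Jensen. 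Your dual-perturbation argument at $(\prim,\dal+\rho w)$ is a legitimate and standard alternative (it is the mechanism used in the reference the paper generalizes), but two cautions apply. First, $(\prim,\dal+\rho w)\notin\mathcal{Z}$, so you cannot cite the theorem's statement at that point; you must invoke the pre-Bihari master inequality, which does hold for arbitrary dual test vectors, and then bound $\sum_k\|\dal^k-\dal-\rho w\|_{\Gamma^{-1}}$ by the triangle inequality using the bound on $\sum_k\|\dal^k-\dal\|_{\Gamma^{-1}}$ already established at the true saddle point. Second, optimizing over $\rho$ as you describe yields a bound whose leading noise term is $O(\delta^2\|\Gamma\|\|\Gamma^{-1}\|/\alpha)$ rather than the paper's $O(N\delta^2\|\Gamma\|\|\Gamma^{-1}\|/\alpha^2)$; this is at least as strong for $N\ge1$ (so it implies \eqref{e: RN} after checking the remaining constants), but it will not reproduce \eqref{e: RN} literally. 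What your route buys is modularity and, arguably, a sharper feasibility estimate; what the paper's route buys is a direct derivation with the exact constants displayed in the statement.
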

The proof of Theorem~\ref{Th:PPD} is given in the Appendix, Section \ref{Proof:PPD}. The proof  combines and extends the techniques  developed in \cite{briceno2021random} and \cite{molinari2021iterative}, based on the firm non-expansivity of the proximal point operator and discrete Bihari's lemma to deal with the error; see also \cite{rasch2020inexact}.

In the next result,  we establish upper bounds for the Lagrangian and feasibility gap   analogous to those proposed in Theorem \ref{Th:PDP}, but for algorithm PDA. The main difference is that now the activation step is based on a priori information in the dual space $\RR^d$, and not on $C_{\delta}$. This set is represented by the intersection of fixed point sets of a finite number of operators and encodes some knowledge about the dual solution.
\begin{theorem}
\label{Th:PDP}
Consider the setting of \ref{A: PDSP} under Assumptions A1, A2, and A4. Let $(\bar{\proj}^0,\prop^{0},\prim^{0})\in \RR^{2d}\times\RR^{p}$ be such that 
$\prop^0=\bar{\proj}^{0}$. Then, for every  $z~=~(\prim,\dal)~\in~\mathcal{Z}$ and for every $N\in \NN$, we have that\\
 \scalebox{0.99}{\parbox{\linewidth}{\begin{align}
 \label{B: Dual-lagrangian}
    \mathcal{L}\left(\Prim^{N},\dal\right)- \mathcal{L}\left(\prim,\Dal^{N}\right)\leq& \frac{V(z^{0}-z)}{N}+2\|\Gamma^{\frac{1}{2}}\|^{2} N\delta^{2}+\|\Gamma^{\frac{1}{2}}\|\delta\left(2V(z^{0}-z)\right)^{\frac{1}{2}}, \hspace{2mm}\end{align}}}\\ 
 and  \\\scalebox{0.99}{\parbox{\linewidth}{\begin{align}
\label{B: Dual-feasibility}
        \|A\Prim^N-b\|^2\leq& \frac{8\|\Gamma^{\frac{1}{2}}\|^{2}\|\Gamma^{-1}\| N\delta^{2}}{\alpha}+\frac{4\|\Gamma^{\frac{1}{2}}\|\|\Gamma^{-1}\|\delta\left(2V(z^{0}-z)\right)^{\frac{1}{2}}}{\alpha}\nonumber\\&+\frac{4\|\Gamma^{-1}\|V(z^{0}-z)}{N\alpha}+2 \delta^{2}. 
    \end{align}}}
    \\ where we recall that the constants $\alpha$ is defined in Assumptions A1.
\end{theorem}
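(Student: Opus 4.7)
The approach would mirror the analysis underlying Theorem~\ref{Th:PPD}, but takes advantage of a structural simplification: in \ref{A: DPSP} (which the initial condition $(\prim^0,\bar{\prop}^0,\dal^0)\in\RR^p\times\RR^{2d}$ and the use of Assumption~A4 indicate is the actual object of the statement) the activation operator $T_{\epsilon_k}$ acts on the dual variable and is \emph{exactly} quasi-nonexpansive with respect to $\SSS^*_\delta$, with no $e_i\delta^2$ slack. Consequently no activation error has to be amplified through a discrete Bihari argument, which is precisely why \eqref{B: Dual-lagrangian}--\eqref{B: Dual-feasibility} contain no $e$-dependence. The plan is to fix a saddle point $z=(\prim,\dal)\in\mathcal{Z}$, select an auxiliary reference $\bar\dal\in\SSS^*_\delta$ (whose existence and proximity $\|\dal-\bar\dal\|=O(\delta)$ would come from a stability argument for the perturbed dual problem), and track the merit quantity $V(z^{k+1}-z)$ along the iterations.

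The per-iteration descent would combine three ingredients. First, firm non-expansivity of $\prox^{\pSigma}_J$ applied to $\prim^{k+1}=\prox^{\pSigma}_J(\prim^k-\pSigma A^*\bar{\prop}^k)$ yields a subgradient-type inequality bounding $J(\prim^{k+1})-J(\prim)+\scal{\bar{\prop}^k}{A(\prim^{k+1}-\prim)}$ in terms of squared-$\|\cdot\|_{\pSigma^{-1}}$ differences. Second, the explicit dual step $\dal^{k+1}=\prop^k+\Gamma(A\prim^{k+1}-b^\delta)$ is rewritten through the three-point identity in the $\|\cdot\|_{\Gamma^{-1}}$-norm, after splitting $b^\delta=b+(b^\delta-b)$ to isolate the noise residual $\scal{\dal-\dal^{k+1}}{b-b^\delta}$. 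Third, Assumption~A4 is invoked as $\|\prop^{k+1}-\bar\dal\|_{\Gamma^{-1}}^2\leq\|\dal^{k+1}-\bar\dal\|_{\Gamma^{-1}}^2$, giving a sign-definite slack that replaces the trivial identity $\prop^{k+1}=\dal^{k+1}$ of the vanilla primal-dual scheme. Adding these gives a per-iteration inequality of the form
\[
\mathcal{L}(\prim^{k+1},\dal)-\mathcal{L}(\prim,\dal^{k+1})+V(z^{k+1}-z)\leq V(z^k-z)+R_k+N_k,
\]
with $R_k$ a cross-term in $(\bar{\prop}^k,\dal^{k+1},\prop^k,\prop^{k+1})$ designed to telescope, and $N_k$ collecting the noise residuals.

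The telescoping of $R_k$ exploits the extrapolation $\bar{\prop}^{k+1}=\prop^{k+1}+\dal^{k+1}-\prop^k$, together with Assumption~A1 (the condition $\|\Gamma^{1/2}A\pSigma^{1/2}\|^2<1$), which absorbs the residual primal-dual coupling $\scal{A(\prim^{k+1}-\prim)}{\dal^{k+1}-\bar{\prop}^k}$ at the cost of a factor $\alpha$. Summing from $k=0$ to $N-1$, applying Jensen's inequality to the convex-concave Lagrangian, and dividing by $N$ produces \eqref{B: Dual-lagrangian}; the $\|\Gamma^{1/2}\|\delta(2V(z^0-z))^{1/2}$ term arises from controlling $\sum_k N_k$ by Cauchy--Schwarz followed by a discrete Bihari estimate as in \cite{rasch2020inexact,molinari2021iterative}, while the linear-in-$N$ term $2\|\Gamma^{1/2}\|^2 N\delta^2$ comes from the direct quadratic bound on the noise. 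For the feasibility estimate \eqref{B: Dual-feasibility} I would specialize \eqref{B: Dual-lagrangian} at $\prim=\prim^\star$ and replace the generic dual argument by $\dal^\star+\rho(A\Prim^N-b)$ for a saddle $(\prim^\star,\dal^\star)\in\mathcal{Z}$ and $\rho>0$, use the lower bound $\mathcal{L}(\Prim^N,\dal^\star+\rho(A\Prim^N-b))-\mathcal{L}(\prim^\star,\Dal^N)\geq \rho\|A\Prim^N-b\|^2$ valid at a saddle, and optimize in $\rho$; the $\|\Gamma\|\|\Gamma^{-1}\|$ conditioning comes from converting between $\|\cdot\|$ and $\|\cdot\|_{\Gamma^{-1}}$, while the factor $\alpha$ in the denominator is inherited from Assumption~A1.

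The main obstacle, as in the proof of Theorem~\ref{Th:PPD}, is the bookkeeping of the three dual sequences $\dal^k,\prop^k,\bar{\prop}^k$ in the presence of both noise and activations: after $\prop^{k+1}=T_{\epsilon_{k+1}}\dal^{k+1}$ the identities valid for the vanilla primal-dual scheme must be re-derived with a residual that is then absorbed using the sign-definite slack from Assumption~A4, all while keeping the telescoping structure consistent with the extrapolation in the $\|\cdot\|_{\Gamma^{-1}}$-metric. A minor but non-trivial point is ensuring that the stability reference $\bar\dal\in\SSS^*_\delta$ can indeed be chosen so that $\|\dal-\bar\dal\|$ is of order $\delta$ (rather than entering the final bound as an unbounded additional term); if not, the analysis should instead apply the quasi-nonexpansivity of A4 directly with a saddle-like anchor on $\SSS^*_\delta$ and then pass to the noise-free saddle $(\prim,\dal)$ only at the final step of bounding the Lagrangian gap.
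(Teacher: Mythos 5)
Your skeleton for the Lagrangian bound \eqref{B: Dual-lagrangian} — prox subgradient inequality, three-point identity for the dual step in the $\|\cdot\|_{\Gamma^{-1}}$ metric, splitting $b^\delta=b+(b^\delta-b)$, telescoping the cross term via the extrapolation and Assumption A1, then a discrete Bihari estimate for $\delta\sum_k\|\dal^k-\dal\|_{\Gamma^{-1}}$ and Jensen — is exactly the paper's route, and your observation that A4 contributes a sign-definite slack with no $e\delta^2$ term is correct. However, two of your choices introduce genuine gaps. First, the auxiliary reference $\bar\dal\in\SSS^*_\delta$ with $\|\dal-\bar\dal\|=O(\delta)$ is both unnecessary and unjustified: for a general convex $J$ there is no stability estimate of order $\delta$ between $\SSS^*$ and $\SSS^*_\delta$ (the latter may even be empty when $b^\delta\notin\ran(A)$, as the paper notes for the primal side), so the bound would acquire an uncontrolled additive term. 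The paper simply invokes the quasi-nonexpansivity of A4 with the anchor $\dal$ from the saddle point $z\in\mathcal{Z}$ itself, replacing $\|\prop^k-\dal\|^2_{\Gamma^{-1}}$ by $\|\dal^k-\dal\|^2_{\Gamma^{-1}}$ in the per-iteration estimate; no auxiliary point and no Bihari amplification of an activation error are needed.

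Second, and more seriously, your derivation of the feasibility bound \eqref{B: Dual-feasibility} by evaluating the Lagrangian gap at the perturbed multiplier $\dal^\star+\rho(A\Prim^N-b)$ and optimizing over $\rho$ does not go through here. The upper bound \eqref{B: Dual-lagrangian} is only established for $z\in\mathcal{Z}$, precisely because the per-iteration inequality uses A4 with the dual anchor; the perturbed multiplier is not in $\SSS^*_\delta$ (nor in $\SSS^*$) in general, so the quasi-nonexpansivity step is unavailable for that anchor and the chain of inequalities breaks. Moreover $V(z^0-z)$ would then depend on $A\Prim^N-b$, entangling the optimization in $\rho$. The paper avoids all of this: the feasibility estimate is extracted directly from the term $\frac{\alpha}{2}\sum_{k}\|\dal^{k}-\prop^{k-1}\|^2_{\Gamma^{-1}}$ that already survives on the left-hand side after telescoping, combined with the algorithmic identity $\dal^{k+1}-\prop^{k}=\Gamma(A\prim^{k+1}-b^\delta)$, which gives $\|\dal^{k+1}-\prop^{k}\|^2_{\Gamma^{-1}}\geq\frac{1}{\|\Gamma^{-1}\|}\bigl(\tfrac{1}{2}\|A\prim^{k+1}-b\|^2-\delta^2\bigr)$, followed by Jensen. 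You should replace the perturbed-multiplier argument by this direct extraction; otherwise the factor structure $\|\Gamma^{-1}\|/\alpha$ and the additive $2\delta^2$ in \eqref{B: Dual-feasibility} do not emerge.
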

The proof is given in the Appendix, Section \ref{Proof:PDP}.\\
\\ \

First, we comment the chosen optimality measures. If the penalty is strongly convex, the Bregman divergence is an upper bound of the squared norm of the difference between the reconstructed and the ideal solution, while if $J$ is only convex, the Bregman divergence gives only limited information. As discussed in \cite{rasch2020inexact}, the Lagrangian gap is equivalent to the Bregman distance of the iterates to the solution, and in general it is a very weak convergence measure. For instance, in the exact case, a vanishing Lagrangian gap does not imply that cluster points of the generated sequence are primal solutions. However, as can be derived from \cite{molinari2021iterative}, a vanishing Lagrangian gap coupled with vanishing feasibility gap implies that every cluster point of the primal sequence is a solution of the primal problem. 

In both theorems, the established result ensures that the two optimality measures can be upper bounded with the sum of two terms. The first one, which can be interpreted as an optimization error, is of the order $\mathcal{O}(N^{-1})$ and so it goes to zero as $N$ tends to $+\infty$. Note that, in the exact case $\delta=0$, only this term is present and both the Lagrangian and the feasibility gap are indeed vanishing, guaranteeing that every cluster point of the sequence is a primal solution. The second term, which can be interpreted as a stability control, collects all the errors due to the perturbation of the exact datum and takes also into account the presence of the activation operators $T$, when the reuse data constraint is noisy. It is an increasing function of the number of iterations and the noise level $\delta$.

\begin{remark} 
Theorems~\ref{Th:PPD} and \ref{Th:PDP} are an extension of \cite{briceno2021random}, where the authors prove that the sequence generated by the algorithms converges to an element in $\mathcal{Z}$ when $\delta=0$, but no convergence rates neither stability bounds were given. In this work, we filled the gap for linearly constrained convex optimization problems.
\ \\

Moreover, in the noise free case,  our assumptions on the additional operators $T$ are weaker than those proposed in \cite{briceno2021random}, where $\alpha$-averagedness is required. 
 For the noisy case, without the activation operators (so with $e=0$), our bounds are of the same order as \cite{molinari2021iterative} in the number of iterations and noise level.
\end{remark}

 As mentioned above, in \eqref{e: DG} and \eqref{e: RN}, when $\delta>0$ and $N\rightarrow +\infty$ the upper bounds for the \ref{A: PDSP} iterates tend to infinity and the iteration may not converge to the desired solution.  The same comment can be made for the \ref{A: DPSP} iterates, based on \eqref{B: Dual-lagrangian} and \eqref{B: Dual-feasibility}.
 In both cases, to obtain a minimal reconstruction error, we need to impose a trade off between convergence and stability.
 The next corollary introduces an early stopping criterion, depending only on the noise level and leading to stable reconstruction. 
 
\begin{corollary}\label{ESPDA} (Early-stopping). Under the assumptions of Theorem \ref{Th:PPD} or Theorem~\ref{Th:PDP}, choose $N={c}/{\delta}$ for some $c>0$. Then, for every  $z~=~(\prim,\dal)~\in~\mathcal{Z}$, there exist constants $C_1$, $C_2$, and $C_3$ such that
 \begin{align}
    \mathcal{L}\left(\Prim^{N},\dal\right)- \mathcal{L}\left(\prim,\Dal^{N}\right)\leq&   C_1\delta\nonumber\\
        \|A\Prim^N-b\|^2\leq& C_2\delta+ C_3\delta^{2}.
        \label{e: earlystoppingp}
    \end{align}
\end{corollary}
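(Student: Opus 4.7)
The proof is a direct substitution of $N=c/\delta$ into the four bounds of Theorems~\ref{Th:PPD} and~\ref{Th:PDP}, followed by grouping the resulting terms by their order in $\delta$. I would treat the two algorithms in turn.

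For \ref{A: PDSP}, inspect the five summands on the right-hand side of \eqref{e: DG}. The term $V(z^{0}-z)/N$ becomes $V(z^{0}-z)\delta/c$; the term $2N\|\Gamma^{1/2}\|^{2}\delta^{2}/\alpha$ becomes $2c\|\Gamma^{1/2}\|^{2}\delta/\alpha$; the term $\delta\|\Gamma^{1/2}\|(2V(z^{0}-z)/\alpha)^{1/2}$ is already linear in $\delta$. These three are $O(\delta)$ with constants depending on $c$, $\alpha$, $\|\Gamma^{1/2}\|$, and $V(z^{0}-z)$. The term $\delta\|\Gamma^{1/2}\|(Ne\delta^{2}/\alpha)^{1/2}$ reduces to $\|\Gamma^{1/2}\|(ce/\alpha)^{1/2}\delta^{3/2}$, and the last summand $e\delta^{2}/2$ is of order $\delta^{2}$. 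On any bounded range of noise levels $\delta\leq\delta_{0}$, one has $\delta^{3/2}\leq\delta_{0}^{1/2}\delta$ and $\delta^{2}\leq\delta_{0}\delta$, so all five summands collapse into a single bound of the form $C_{1}\delta$.

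For the feasibility gap \eqref{e: RN} the same substitution leaves four terms of order $\delta$, two terms of order $\delta^{2}$, and a single $O(\delta^{3/2})$ summand arising from $8\delta^{2}\|\Gamma^{-1}\|(\|\Gamma\|eN/\alpha^{3})^{1/2}$. Grouping yields the required $C_{2}\delta+C_{3}\delta^{2}$, with the cross term absorbed via the elementary inequality $\delta^{3/2}\leq\delta+\delta^{2}$ and the constants readjusted accordingly. The analysis of \eqref{B: Dual-lagrangian} and \eqref{B: Dual-feasibility} for \ref{A: DPSP} is strictly simpler, since no activation constant $e$ appears: the substitution produces only terms of order $\delta$ in the Lagrangian estimate and only terms of orders $\delta$ and $\delta^{2}$ in the feasibility estimate, so the desired conclusion follows immediately.

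There is essentially no obstacle beyond careful bookkeeping: this is a corollary in the literal sense, obtained by tuning $N$ so as to balance the $\mathcal{O}(1/N)$ optimization error against the $\mathcal{O}(N\delta^{2})$ noise accumulation visible in both theorems. The only mildly delicate point is that the $\delta^{3/2}$ contributions generated by the activation error must be absorbed via $\delta^{3/2}\leq\delta+\delta^{2}$ (or via a smallness assumption on $\delta$); this is the standard way to phrase a Morozov-type early stopping rule and is consistent with the regime of moderate noise in which the corollary is informative.
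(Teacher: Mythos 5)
Your proposal is correct and follows exactly the route the paper intends: the corollary is stated without an explicit proof precisely because it is the direct substitution $N=c/\delta$ into \eqref{e: DG}, \eqref{e: RN}, \eqref{B: Dual-lagrangian} and \eqref{B: Dual-feasibility} that you carry out. You also rightly flag the only genuine subtlety, namely that the $\delta^{3/2}$ contributions coming from the activation error $e$ must be absorbed via $\delta^{3/2}\leq\delta+\delta^{2}$ (and, for the Lagrangian bound to read purely as $C_{1}\delta$, via a bound $\delta\leq\delta_{0}$), a point the paper glosses over; the minor miscount of the number of $O(\delta)$ summands in \eqref{e: RN} is immaterial.
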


The early stopping rule prescribed above is computationally efficient, in the sense that the number of iterations is proportional to the inverse of the noise level. In particular, if the error $\delta$ is small then more iterations are useful, while if $\delta$ is big, it is convenient to stop sooner. So, the number of iterations plays the role of a regularization parameter. 
Using the early stopping strategy proposed above, we can see that the error in the data transfers to the error in the solution with the same noise level, which is the best that one can expect for a general operator $A$.

\begin{remark}\textbf{Comparison with Tikhonov regularization.} The reconstruction properties of our proposed algorithm are comparable to the ones obtained using Tikhonov regularization \cite{engl1996regularization}, with the same dependence on the noise level \cite{benning2011error}. We underline that in the previous paper only the Bregman divergence is considered, and not the feasibility.

One main difference between Tikhonov and iterative regularization techniques is the fact that the Tikhonov parameter $\lambda$ is a continuous regularization parameter, while the iteration counter is a discrete one. This may be seen as a disadvantage, but usually in the practise it may be fixed with the choice of a smaller step-size in the algorithm.
On the other hand, iterative regularization is way more efficient from the computational point of view, as it requires the solution of only one optimization problem, while explicit regularization amounts to solve a family of problems indexed by the regularization parameter. Let us also note that, when $\delta$ is unknown, any principle used to determine a suitable $\lambda$ can be used to determine the stopping time.
\end{remark}

\section{Implementation details}
\label{s: app}

In this section we discuss some possible standard choices to construct non-expansive operators $T$ that satisfy our assumptions and encode some redundant information on the solution set. We first present examples for \ref{A: PDSP}, and later for \ref{A: DPSP}. 

To define the operators, we first recall the projection on a row. For every $j\in  [d]$ we denote by $a_j$ the $j$-th row of $A$ and by $P_j$ the projection onto the $j$-th linear equation; namely,
\begin{align}
   P_{j}\colon\mathbb{R}^p \mapsto \mathbb{R}^p, \hspace{2mm} \prim \mapsto    \prim+\frac{b_{j}-\scal{a_{j}}{\prim}}{\|a_{j}\|^2}a_{j}^{*}.
    \label{d: averaged}
\end{align} 
Analogously, for every $j\in [d]$, we denote by $P^\delta_{j}$ the projection operator as in the previous definition but with the noisy data $b^\delta$ instead of $b$.

We proceed to define the four families of operators proposed in this paper for \ref{A: PDSP}.
\begin{definition}\label{O: Operators} The operator $T\colon \RR^{p}\mapsto\RR^{p}$ is a 
\begin{enumerate}
    \item \textbf{Serial projection} if
    \begin{align}
  T=P^{\delta}_{\beta_{l}}\circ\cdots\circ P^{\delta}_{\beta_{1}},
    \label{d: averaged1}
\end{align} where, for every $j\in [l]$, $\beta_{j}\in [d]$.
    \item \textbf{Parallel projection} if
    \begin{align}
  T=\sum\limits_{j=1}^{l}\alpha_{j} P^{\delta}_{\beta_{j}}
    \label{d: averaged2}
\end{align} where, for every $j\in [l]$, $\beta_{j}\in [d]$ and $\left(\alpha_{j}\right)_{j=1}^{l}$ are real numbers in $[0,1]$, such that $\sum\limits_{j=1}^{l}\alpha_{j}=1$. 
 \item \textbf{Landweber operator} with parameter $\alpha$ if
    \begin{align}
   T:\mathbb{R}^p \mapsto \mathbb{R}^p, \hspace{2mm} \prim \mapsto    \prim-\alpha A^{*}(A\prim-b^\delta).
    \label{d: averaged3}
\end{align}
   where $\alpha\in ]0,\frac{2}{\|A\|^2}[$.
    \item \textbf{Landweber operator with adaptive step} and parameter $M$ if
    \begin{align}
   T\colon\mathbb{R}^p \mapsto \mathbb{R}^p, \hspace{2mm} \prim \mapsto   \left\{ \begin{array}{ll} \prim-\beta(x) A^{*}(A\prim-b^\delta)
             &  \text{\ \ \ if } A^{*}A\prim\neq A^{*}b_\delta \\
              \prim &  \text{\ \ \ otherwise.}
             \end{array}\right.
    \label{d: averaged4}
\end{align}  
where, for $M>0$, $\beta(x)=\min\left(\frac{\|A\prim-b^\delta\|^2}{\|A^{*}(A\prim-b^\delta)\|^2},M\right)$.
\end{enumerate}
\end{definition}

The next lemma states that the operators in Definition~\ref{O: Operators} satisfy Assumption A3.
\begin{lemma}
\label{L: Series Parallel}
Let $T\colon\RR^p\to\RR^p$ be one of the  operators given in Definition~\ref{O: Operators}. Then Assumption A3 holds with
 \begin{enumerate}
     \item $e_T~=\sum_{j=1}^l \frac{1}{\|a_{{\beta_j}}\|^2}$, if $T$ is a serial projection;
     \item $e_T~=\sum_{j=1}^l\frac{\alpha_{j}}{\|a_{\beta_j}\|^2}$, if $T$ is a parallel projection;
    \item   $e_T=\frac{\alpha}{2-\alpha\|A\|^2}$,  if  $T$ is the Landweber operator with parameter $\alpha$;
    \item $e_T=M$, if $T$ is the Landweber operator with adaptive step and parameter $M$.
 \end{enumerate} 
\end{lemma}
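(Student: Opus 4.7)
Plan: I would verify parts (a) and (b) of Assumption A3 for each of the four operator families in Definition \ref{O: Operators} in turn. Part (a), the inclusion $\Fix T \supseteq \SSS_\delta$, is immediate in every case: for $x \in \SSS_\delta$ we have $Ax = b^\delta$, which gives $\langle a_j, x\rangle = b^\delta_j$ and $A^*(Ax - b^\delta) = 0$, so $x$ is fixed by each $P^\delta_j$ and by both Landweber maps, and hence by any serial composition or convex combination of these projections.

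The backbone for items 1 and 2 is the one-row identity. Setting $r_j := \langle a_j, x\rangle - b_j$ and $\epsilon_j := b^\delta_j - b_j$ for $\bar x\in\SSS$, direct expansion of $P^\delta_j x - \bar{x}$ collapses, via $2(\epsilon_j - r_j)r_j + (\epsilon_j - r_j)^2 = \epsilon_j^2 - r_j^2$, to
\begin{equation*}
\|P^\delta_j x - \bar{x}\|^2 = \|x - \bar{x}\|^2 + \frac{\epsilon_j^2 - r_j^2}{\|a_j\|^2} \leq \|x - \bar{x}\|^2 + \frac{\delta^2}{\|a_j\|^2},
\end{equation*}
where the last step uses $\epsilon_j^2 \leq \|b^\delta - b\|^2 \leq \delta^2$. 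Iterating along the $l$ projections in the composition accumulates the sum $\sum_{j=1}^l 1/\|a_{\beta_j}\|^2$ (serial case), while convexity of $\|\cdot\|^2$ applied to the convex combination $\sum_j \alpha_j P^\delta_{\beta_j}$ yields the weighted sum $\sum_j \alpha_j/\|a_{\beta_j}\|^2$ (parallel case).

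The Landweber operator in item 3 is the main technical obstacle: the clean constant $\alpha/(2 - \alpha\|A\|^2)$ only emerges after a carefully tuned Young's inequality. With $r := Ax - b$ and $e := b^\delta - b$, direct expansion of $\|Tx - \bar{x}\|^2 = \|x - \bar{x} - \alpha A^*(r - e)\|^2$ together with $\|A^*(r-e)\|^2 \leq \|A\|^2\|r-e\|^2$ yields
\begin{equation*}
\|Tx - \bar{x}\|^2 \leq \|x - \bar{x}\|^2 - \alpha(2 - \alpha\|A\|^2)\|r\|^2 + 2\alpha(1 - \alpha\|A\|^2)\langle r, e\rangle + \alpha^2 \|A\|^2 \|e\|^2.
\end{equation*}
The plan is to apply Young's inequality to the cross term with parameter $(2 - \alpha\|A\|^2)/|1 - \alpha\|A\|^2|$, chosen so that the induced $\|r\|^2$ contribution exactly cancels the preceding negative quadratic. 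The surviving $\|e\|^2$ coefficient then collapses, via the algebraic identity $\alpha(1 - \alpha\|A\|^2)^2 + \alpha^2 \|A\|^2 (2 - \alpha\|A\|^2) = \alpha$, to exactly $\alpha/(2 - \alpha\|A\|^2)$. The edge case $\alpha\|A\|^2 = 1$ is handled separately, since the cross term vanishes there.

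The adaptive-step variant in item 4 is the shortest argument once the right observation is made. When $A^*Ax = A^*b^\delta$, $Tx = x$ and the inequality is trivial. In the generic branch, the definition of $\beta(x)$ as a minimum yields, in both possible cases, the key bound $\beta(x)^2 \|A^*(r-e)\|^2 \leq \beta(x)\|r-e\|^2$. Substituting this into the expansion of $\|Tx - \bar{x}\|^2 = \|x-\bar x\|^2 - 2\beta(x)\langle r,r-e\rangle + \beta(x)^2\|A^*(r-e)\|^2$ and simplifying with the identity $-2\langle r, r - e\rangle + \|r - e\|^2 = \|e\|^2 - \|r\|^2$ produces
\begin{equation*}
\|Tx - \bar{x}\|^2 \leq \|x - \bar{x}\|^2 + \beta(x)(\|e\|^2 - \|r\|^2) \leq \|x - \bar{x}\|^2 + M\delta^2,
\end{equation*}
so that $e_T = M$.
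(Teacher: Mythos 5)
Your proof is correct and follows essentially the same route as the paper: the per-row bound $\|P^\delta_j x-\bar x\|^2\le\|x-\bar x\|^2+\delta^2/\|a_j\|^2$ (which you obtain by direct expansion where the paper uses the parallel-hyperplanes Pythagorean identity), iteration/convexity for the serial and parallel cases, and a Young's inequality tuned to cancel the negative residual term for the fixed-step Landweber operator. Your treatment of the adaptive step is a slight simplification — exploiting $\beta(x)^2\|A^*(r-e)\|^2\le\beta(x)\|r-e\|^2$ and discarding $-\beta(x)\|r\|^2$ instead of a second Young's inequality — but the argument and the constants obtained are the same.
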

\begin{remark}\label{R: Parallel-Landweber} \textbf{Relationship between Parallel projection and Landweber operator}. A particular  parallel projection is the one corresponding to $l=d$, $\beta_{j}=j$, and $\alpha_{j}=\frac{\|a_j\|^2}{\|A\|_{F}^2}$. Then,  \eqref{d: averaged2} reduces to
\begin{equation}
    T(x)=x-\frac{1}{\|A\|_{F}^2}A^{*}(Ax-b^\delta).\label{T:Land-Paralell}
\end{equation}
Observe that, since $\|A\|\leq \|A\|_{F}$, the previous is a special case of Landweber operator with $\alpha=\frac{1}{\|A\|_{F}^2}$.
\end{remark}
\begin{remark}\textbf{Steepest descent}. Let $\bar{\prim}\in \R^p$ such that $A\bar{\prim}=b$. Then, from \eqref{d: averaged4}, we derive (see also equation \eqref{e:Steepest descentbeta} in the Appendix)
\begin{align}
    \|T\prim-\bar{\prim}\|^2&=\|\prim-\bar{\prim}\|^2-2\beta(x)\scal{b^{\delta}-b}{A\prim-b^\delta}-2\beta(x)\|A\prim-b^\delta\|^2\nonumber\\&\hspace{2mm}+\beta(x)^2\|A^{*}(A\prim-b^\delta)\|^2.\label{e:Steepest descentbeta0}
\end{align}
If $\delta=0$, then the choice of $\beta(x)$ given in \eqref{d: averaged4} minimizes  the right hand side of \eqref{e:Steepest descentbeta0}, if the minimizer is smaller than $M$. In this case, $\beta$ is chosen in order to maximize the contractivity with respect to a fixed point of $T$. While we cannot repeat the same procedure for $\delta > 0$, since we do not know $b$, we still keep the same choice. 
If $b^\delta\in \ran(A)$, then $\sup\limits_{x\in\RR^p}\frac{\|A\prim-b^\delta\|^2}{\|A^{*}(A\prim-b^\delta)\|^2}<+\infty$. 
However, in general, if $\delta > 0$, this is not true and $M$ is needed to ensure that $\beta(x)$ is bounded.  
\end{remark}
\begin{remark} 
    From a computational point of view, parallel projections and Landweber operators are more efficient than  serial projections. In particular, note that the quantity $(Ax^{k}-b^\delta)$ needs to  be computed anyway in the other steps of the algorithm.
    \end{remark}
While for the primal space the reuse data constraint that we want to exploit is clearly given by the linear constraint, for the dual is not always so. In the following we present an example related to the $\ell^1$ norm. A similar implementation can be extended to the case of $1$-homogenous penalty functions, for which the Fenchel conjugate is the indicator of a closed and convex subset of the dual space \cite[Proposition 14.11 (ii)]{bauschke2011convex}. 

\begin{example}
\label{e:dpl1}
Consider the noisy version of problem \ref{P: problem} with $J(x)= \|x\|_1$. 
Then the dual is given by
\[
\min_{u\in\RR^d} \langle b^\delta, u\rangle \,:\, |(A^*u)_i| \leq 1, \text{ for every $i\in [p]$}. 
\]
For every $i\in [p]$, set $D_i=\{u\in\RR^d\,:\, |(A^*u)_i| \leq 1\}$ and denote by $T_i$ the projection over $D_i$.
Note that this is trivial to compute, since it is the projection onto the intersection of two parallel hyperplanes. 
Clearly  Assumption A4 holds. 
Differently from the primal case, here we are projecting on exact constraints, independent from the noisy data $b^{\delta}$.
\end{example}

\section{Numerical results}

In this section, to test the efficiency of the proposed algorithms, we perform numerical experiments in two relevant settings: regularization with the $\ell^1$-norm and total variation regularization. For the $\ell^1$-norm regularization,  we compare our results with other regularization techniques. In the more complex problem of total variation we explore the properties of different variants of our procedure.

\textbf{Code statement:} All numerical examples are implemented in MATLAB\textsuperscript{\textregistered} on a laptop.
In the second experiment we also use the library Numerical tours \cite{peyre2011numerical}. The corresponding code can be downloaded at \href{https://github.com/cristianvega1995/L1-TV-Experiments-of-Fast-iterative-regularization-by-reusing-data-constraints} {https://github.com/cristianvega1995/L1-TV-Experiments-of-Fast-iterative-regularization-by-reusing-data-constraints} 

\subsection{$\ell^1$-norm regularization}
In this section, we apply the routines \ref{A: PDSP} and \ref{A: DPSP}  when  $J$ is equal to the $\ell^1$-norm. We compare the results given by our method with two state-of-the-art regularization procedures: iterative regularization by vanilla primal-dual \cite{molinari2021iterative},  and Tikhonov explicit regularization, using the forward-backward algorithm \cite{Combettes_Wajs2005}. In addition, we compare to another classical optimization algorithm for the minimization of the sum of two non-differentiable functions, namely Douglas-Rachford \cite{briceno2012douglas}. In the noise free case, this algorithm is very effective in terms of number of iterations, but at each iteration it requires the explicit projection on the feasible set. In the noisy case, a stability analysis of the previous is not available. 

We use the four variants of the algorithm \ref{A: PDSP} corresponding to the different choices of the operators $T$ in Definition \ref{O: Operators} and the version of \ref{A: DPSP} described in Example~\ref{e:dpl1}. Unless otherwise stated, in all the experiments we use as preconditioners $\pSigma=\Gamma=\frac{0.99}{\|A\|} \Id$, which both satisfy \eqref{c: ConditionL D1}. 

Let $d=2260$, $p=3000$, and let $A\in\RR^{d\times p}$ be such that every entry of the matrix is an independent sample from $\mathcal{N}(0,1)$, then normalized column by column. We set $b:=Ax^{*}$, where $x^{*}\in \RR^{p}$ is a sparse vector with approximately $300$ nonzero entries uniformly distributed in the interval $[0,1]$. It follows from \cite[Theorem 9.18]{foucart2013invitation} that $x^{*}$ is the unique minimizer of the problem with probability bigger than $0.99$. Let  $b^\delta$ be such that $b^\delta=b+\|b\| u $ where the vector $u$ is distributed, entry-wise, as $U[-0.2,0.2]$.
In this experiment, to test the reconstruction capabilities of our method,  we use the exact datum $x_*$ to establish the best stopping time, i.e. the one minimizing $\|x_k-x_*\|$. The exact solution is also used for the other regularization techniques.  In a real practical situation, if $\delta$ is unknown, we would need to use parameter tuning techniques in order to select the optimal stopping time, but we do not address this aspect here.  

We detail the used algorithms and their parameters below.
\begin{itemize}
    \item[(Tik)] \textbf{Tikhonov Regularization}: We consider a grid of penalty parameters $$G=\left\{\left(1-\frac{l-1}{5}\right)10^{1-d}\|Ab^\delta\|_{\infty} : \ \ l\in[5], \ d\in [6]\right\}$$
    and, for each value $\lambda\in G$, the optimization problem
    \begin{equation} \label{ProbTyk}
    \min\limits_{x\in\RR^{p}}~\left\{\lambda\|x\|_{1}~+~\frac{1}{2}\|Ax-b^\delta\|^2\right\}.
    \end{equation}
    We solve each one of the previous problems with $300$ iterations of forward-backward algorithm, unless the stopping criterion $\|x^{k+1}-x^{k}\|\leq 10^{-3}$ is satisfied earlier. Moreover, to deal efficiently with the sequence of problems, we use warm restart \cite{becker2011nesta}. We first solve problem \eqref{ProbTyk} for the biggest value of $\lambda$ in $G$. Then, we initialize the algorithm for the next value of $\lambda$, in decreasing order, with the solution reached for the previous one; and so on.
    \item[(DR)] \textbf{Douglas Rachford}: see \cite[Theorem 3.1]{briceno2012douglas}.
    \item[(PD)] \textbf{Primal-dual}: this corresponds to  PDA with $m=1$ and $T_1=\Id$.
    \item[(PDS)] \textbf{Primal-dual with serial projections}: at every iteration, we compute a serial projection using all the equations of the noisy system, where the order of the projections is given by a random shuffle. 
    \item[(PDP)]\textbf{Primal-dual with parallel projections}:  $m=1$ and $T_1x=x-\frac{1}{\|A\|_{F}^2}A^{*}(Ax-b^{\delta})$, see Remark \ref{R: Parallel-Landweber}.
    \item[(PDL)]\textbf{Primal-dual Landweber}:  $m=1$ and $T_1x=x-\frac{2}{\|A\|^2}A^{*}(Ax-b^{\delta})$.
    \item[(PDAL)] \textbf{Primal-dual Landweber with adaptive step}: $m=1$, and  $T_1x~=~x~-\beta(x)A^{*}~(Ax~-~b^{\delta})$, where $\beta(x)=\min\left(\frac{\|Ax-b^\delta\|^2}{\|A^{*}(Ax-b^\delta)\|^2}, M\right)$ for $M=10^{6}$. 
     \item[(DPS)]\textbf{Dual primal with serial projections}: at every iteration,  we compute a serial projection  over every inequality of $|A^{*}u|_{\infty}\leq 1$, where the order is given by a random shuffle of the rows of $A^*$.
\end{itemize}
\begin{table}[ht!]\begin{center}
\begin{tabular}{|l|l|l|l|}
\hline
 & Time [S] & Iteration &  \begin{tabular}[c]{@{}l@{}}Reconstruction \\ error\end{tabular} \\ \hline
Tik  & 1.89 & 109 & 3.07  \\ \hline
DR   & 3.08 & 5  & 5.01 \\ \hline
PD   & 0.36 & 14 & 3.11 \\ \hline
PDS  & 1.41 & 11 & 2.58 \\ \hline
PDP  & 0.35 & 14 & 3.11 \\ \hline
PDL  & \textcolor{red}{0.28} & 12 & \textcolor{red}{2.60} \\ \hline
PDAL & \textcolor{red}{0.27} & 11 & \textcolor{red}{2.56} \\ \hline
DPS  & 0.54 & 17 & 2.83 \\ \hline

\end{tabular}\caption{Run-time and number of iterations of each method until it reaches its reconstruction error. We compare the proposed algorithms with Tikhonov regularization (Tik), Douglas-Rachford (DR), and iterative regularization (PD).}
\label{table:1}
\end{center}
\end{table}
\begin{figure}[ht]
    \centering
    \includegraphics[scale=0.25]{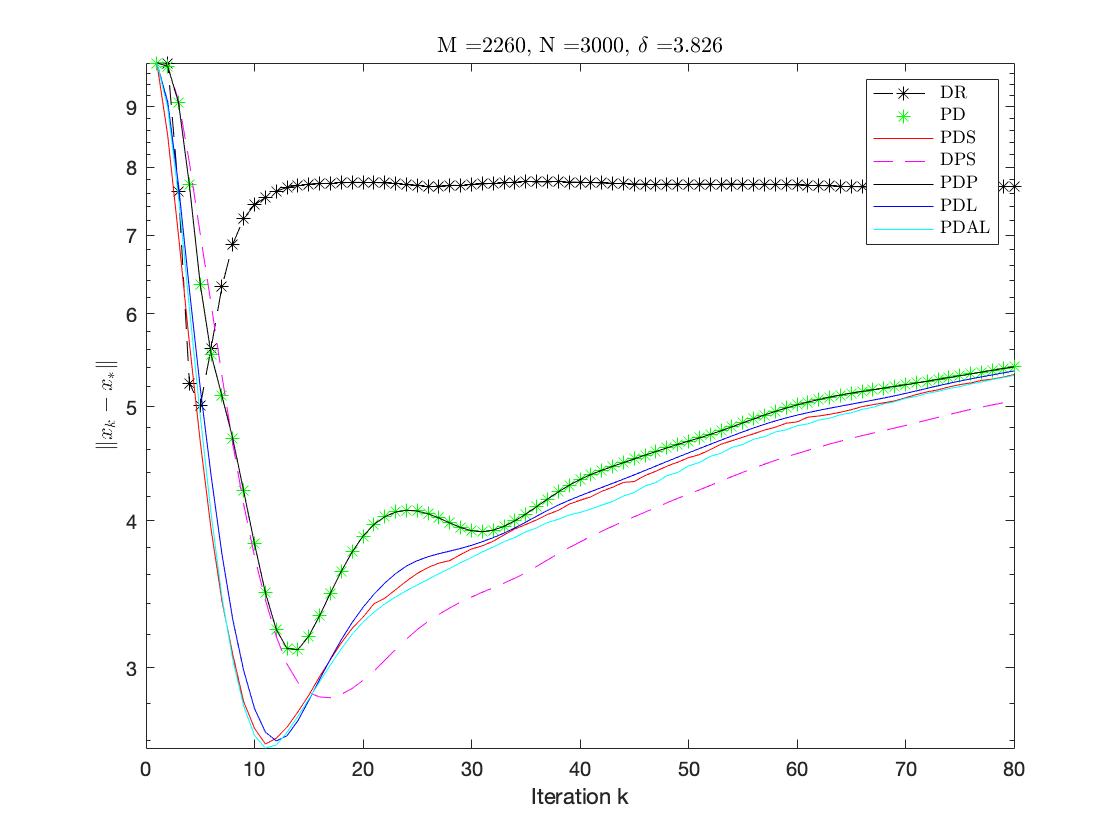}
    \caption{ Graphical representation of early stopping. Note that the first iterates are closer to the noise free solution, then converges to the noisy solution.}
    \label{fig: Early stopping}
\end{figure} \begin{figure}[ht]
    \centering
    \includegraphics[scale=0.25]{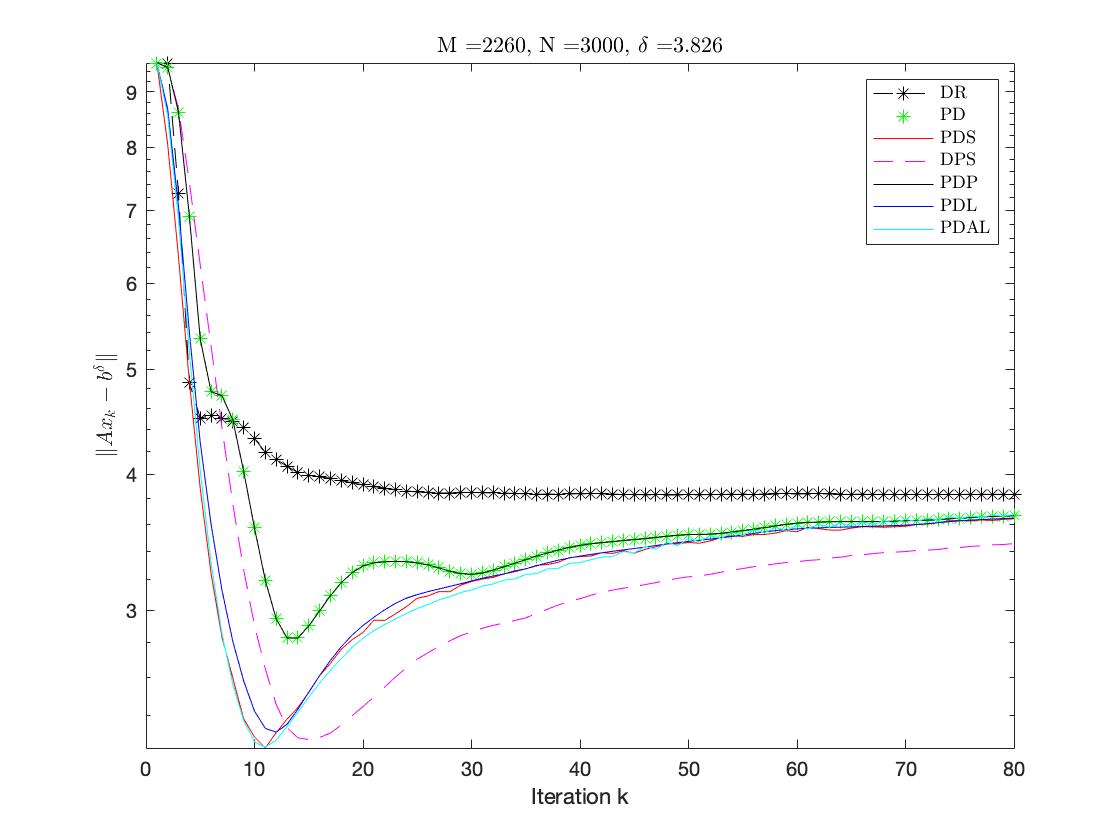}
    \caption{ Early stopping with respect the feasibility. Note that they are similar with respect to previous Figure.}
    \label{fig: Early stoppingFeas}
\end{figure} 
\begin{figure}[ht]
    \centering
    \includegraphics[scale=0.25]{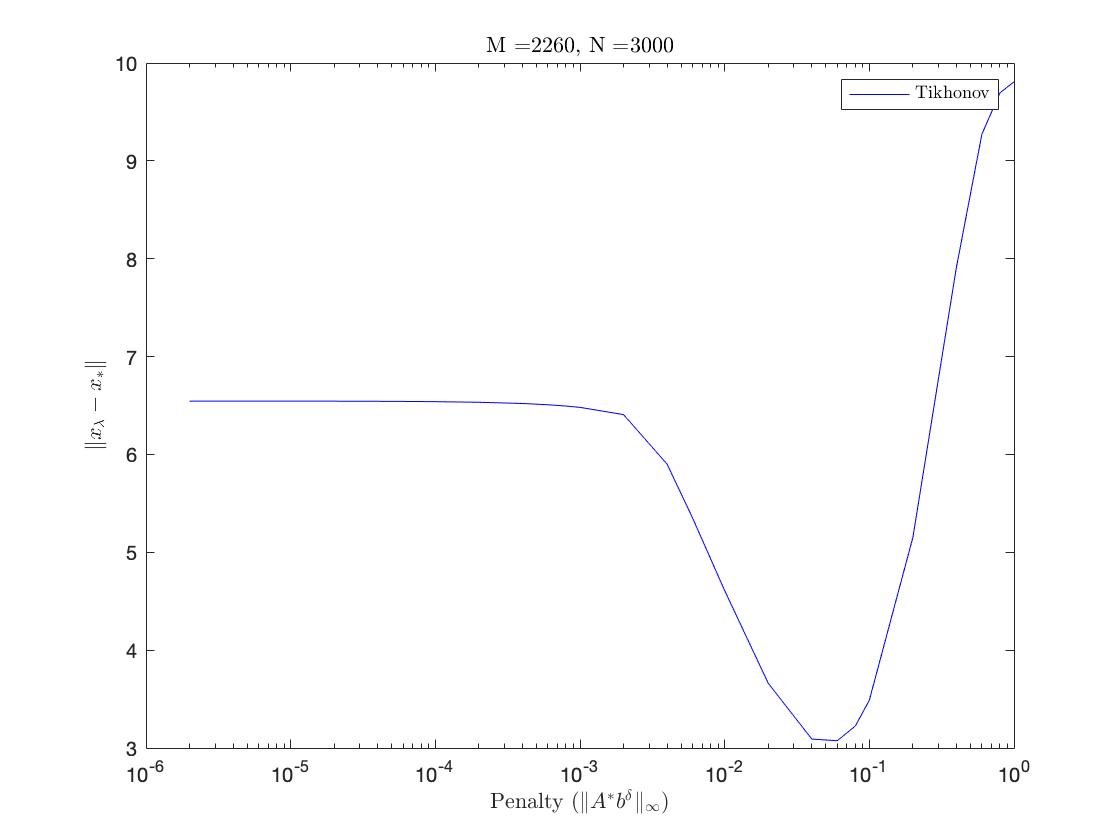}
    \caption{ Reconstruction error of Tikhonov Method with different penalties.}
    \label{fig: Tik}
\end{figure} 
In Table \ref{table:1}, we reported also the number of iterations needed to achieve the best reconstruction error, but it is important to note that the iteration of each method has a different computational cost, so the run-time is a more appropriate comparison criterion.

Douglas-Rachford with early stopping is the regularization method performing worst on this example, both in terms of time and reconstruction error. This behavior may be explained by the fact that this algorithm converges fast convergence to the noisy solution, from which we infer that Douglas-Rachford is not a good algorithm for iterative regularization. Moreover, since  we project on the noisy feasible set at every iteration, the resolution of a linear system is needed at every step. This explains also the cost of each iteration in terms of time. Note in addition that in our example $b^\delta$ is in the range of $A$ and so the noisy feasible set is nonempty. Tikhonov regularization  performs similarly in terms of time, but it requires many more (cheaper) iterations. The achieved error is smaller than the one of DR, but bigger then the minimal one achieved by other methods. 

Regarding our proposals, we observe that the proposed methods perform better than (PD). This supports the idea of reusing the data constraints is beneficial with respect to vanilla primal-dual. The benefit is not evident for (PDP), which achieves the worst reconstruction error, since $\|A\|^2_F$ is very big and so $T_1$ is very close to the identity. All the other methods give better  results in terms of reconstruction error. (PDS) is the slowest since it requires computing several projections at each iteration in a serial manner. We also observe that (PDL) and (PDAL) have better performance improving 22.2\% and 25.0\% in reconstruction error  and  16.4\% and 17.7\% in run-time.

 Figure \ref{fig: Early stopping}  empirically shows the existence of the trade-off between convergence and stability for all the algorithms, and therefore the advantage of early stopping. Similar results were obtained for the feasibility gap.

\subsection{Total variation}
In this section, we perform several numerical experiments using the proposed algorithms for image denoising and deblurring. As done in  the classical image denoising method introduced by Rudin, Osher and Fantemi in \cite{rudin1992nonlinear}, we rely on the total variation regularizer. See also \cite{rudin1992nonlinear,osher1990feature,rudin1994total,chambolle2004algorithm,chambolle1997image,osher2005iterative,xiao2010dual}. We compare (PD) with (PDL) and (PDAL) algorithms, which were the algorithms performing the best in the previous application. In this section, we use two different preconditioners, which have been proved to be very efficient in practice \cite{pock2011diagonal}.

Let $x^{*} \in \mathbb{R}^{N^2}$ represent an image with $N\times N$ pixels in $[0,1]$. We want to recover $x^{*}$  from a blurry and noisy measurement  $y$, i.e. from
\begin{align}
    y=Kx^{*}+e,
\end{align} 
where $K$ is a linear bounded blurring operator and  $e$ is a random noise vector. A standard approach is to assume that the original image is well approximated by the solution of the following constrained minimization problem:
\begin{align}
\label{D:ROF}
\tag{TV}
    \min\limits_{u\in \RR^{N\times N}}&\|Du\|_{1,2}\nonumber\\\text{s.t.}&\hspace{2mm}Ku=y\nonumber,
\end{align}
In the previous,
\begin{align}
    \|\cdot\|_{1,2}\colon  (\RR^{2})^{N\times N}\rightarrow \RR\colon p\rightarrow \sum_{i=1}^{N}\sum_{j=1}^{N}\|p_{ij}\|, 
\end{align}
and $D\colon \RR^{N^2}\rightarrow (\RR^{2})^{N^2}$ is the discrete gradient operator for images, which is defined as
\begin{align}
    \left(D u\right)_{ij}=&\left((D_{x}u)_{ij},(D_{y}u)_{ij}\right)
\end{align}
with \begin{align}
    \left(D_y u\right)_{ij}= &\left\{ \begin{array}{cc}
       u_{i+1,j}-u_{i,j}  & \text{if } 1\leq i\leq N-1 \\
         0 &  \text{if } i=N
    \end{array}\right.\nonumber\\
    \left(D_x u\right)_{ij}=&\left\{ \begin{array}{cc}
       u_{i,j+1}-u_{i,j}  & \text{if } 1\leq j\leq N-1 \\
         0 &  \text{if } j=N.
    \end{array}\right.\nonumber
\end{align}
In order to avoid the computation of   the proximity operator  of $\| D \cdot\|_{1,2} $, we introduce an auxiliary variable $v=Du \in Y:=\RR^{2N^2}$. Since the value in each pixel must belong to $[0,1]$, 
we add the constraint $u\in X:=[0,1]^{N^2}$. In this way, \eqref{D:ROF} becomes
\begin{align}
 \label{D:ROFL1}
\tag{TV}
    \min\limits_{(u,v)\in X\times Y}&\|v\|_{1,2}\nonumber\\\text{s.t.}&\hspace{2mm}Ku=y\nonumber\\ &\hspace{2mm}Du=v\nonumber. 
\end{align}

\subsubsection{Formulation and Algorithms}
Problem~\eqref{D:ROFL1} is a special instance of \eqref{P: problem}, with
\begin{align}
\left\{\begin{array}{l}
     J\colon \RR^{N^2}\times \RR^{2N^2}\mapsto \RR\cup\{+\infty\}\colon x:=(u,v)\mapsto \|v\|_{1,2}+\iota_{X}(u),  \\ \\
     A=\left[\begin{array}{cc}
        K & 0 \\
        D & -\Id
    \end{array}\right],\hspace{2mm}  b^{\delta}=\left[\begin{array}{c}
        y \\
        0 
    \end{array}\right], \text{ and }  p=d=3N^2.
\end{array}
 \right.
    \label{S: TV}
\end{align} 
Clearly, $A$ is a linear bounded nonzero operator, and $J\in\Gamma_0(\RR^{N^2}\times \RR^{2N^2})$.
\begin{table}[ht]
    \centering
\begin{tabular}{|m{120mm}|}
          \hline   Primal-Dual for total variation  \\  \hline\vspace{2mm} \textbf{Input}:  $(\proj^0,\proj^{-1},\prim^{0},v^0)\in\RR^{6N^2}\times\RR^{2N^2}$ and $(q^0,q^{-1},z^{0},w^0)\in\RR^{3N^2}\times\RR^{N^2}$.\\\vspace{2mm} \begin{flushleft}\textbf{For} $k=1,\ldots,\text{N:}$\end{flushleft}\vspace{-4mm}\\ \vspace{-10mm}
        \begin{align}
    \begin{array}{l}
v^{k+1}= v^k+\Gamma( K(\proj^{k}+ \prim^{k}-\proj^{k-1})^k-y)\\
w^{k+1}= w^k-\Gamma(q^{k}+ z^{k}-q^{k-1})+\Gamma D(\proj^{k}+ \prim^{k}-\proj^{k-1})\\
\prim^{k+1}=P_{X}(\proj^k-\pSigma K^*v^{k+1}+\pSigma w^{k+1})\\ z^{k+1}=\prox_{\pSigma \|\cdot\|_{1,2}}(q^k-\pSigma D^*w^{k+1})\\
\proj^{k+1}=x^{k}-\alpha(x^{k})\left (K^{*}(Kx^{k}-y)+(Dx^{k}-z^{k})\right)\\q^{k+1}=q^{k}-\alpha(x^{k})D^{*}\left(Dx^{k}-z^{k}\right)
\end{array}  \label{A: PDA-TV}\end{align}\vspace{-4mm}\\ \vspace{0mm}\begin{flushleft}\textbf{End}\end{flushleft}  \\  \hline
      \end{tabular}    
 \caption{General form of the algorithms.}
     \label{tab: Tabla 1}
\end{table}

We compare the algorithms listed below. 
Note that all the proposed algorithms are different instances of the general routine described in Table~\ref{tab: Tabla 1}, and each one of them corresponds to a different choice of $\alpha(x^k)$:
\begin{enumerate}
    \item  PD, the vanilla primal-dual algorithm, corresponding to $\alpha(x^k)=0$;
    \item PPD, the preconditioned primal-dual algorithm, obtained by $\alpha(x^k)=0$ and $\pSigma$ and $\Gamma$ as in \cite[Lemma 2]{pock2011diagonal};
     \item PDL, corresponding to $\alpha(x^k)=1/\|A\|^2$;
      \item PDAL, corresponding to $\alpha(x^k)=\beta(x^k)$ as \eqref{d: averaged4}.
\end{enumerate}
Initializing by $p^0=\bar{p}^{0}=x^{0}$ and  $q^0=\bar{q}^{0}=z^{0}$, we recover the results of Theorem \ref{Th:PPD} and Corollary \ref{ESPDA}.
\begin{remark}
     In order to implement the algorithm in \ref{A: PDA-TV}, we first need to compute the
following operators.
\begin{enumerate}
    \item It follows from \cite[Proposition 24.11]{bauschke2011convex} and  \cite[Example 24.20]{bauschke2011convex} that
   $$\prox^{\pSigma}_{\|\cdot\|_{1,2}}(v)=\left(\prox^{\pSigma_{i}}_{\|\cdot\|}(v_{i})\right)_{i=1}^{N^2}=\left(\left(1-\frac{\pSigma}{\max\{\pSigma,\|v\|\}}\right)v_{i}\right)_{i=1}^{N^2},$$
    where $v_i\in\RR^{2}$.
    Analogously, the projection onto $X$ can be computed as
   $$P_{X}(u)=\left(P_{[0,1]}(u_i)\right)_{i=1}^{N^2},$$
    where $P_{[0,1]}(u_i)=\min\{1,\max\{u_i,0\} \}.$
\item It follows from \cite{chambolle2004algorithm} that 
\begin{align}
  \hspace{-6mm}-D^{*}p= \operatorname{div}p=\left\{ \begin{array}{ll}
       (p_1)_{i,j}-(p_1)_{i-1,j}  & \text{if } 1< i<N \\  (p_1)_{i,j}  & \text{if } i=1\\
         -(p_1)_{i-1,j} &  \text{if } i=N
    \end{array}\right.\hspace{-2mm}+\left\{ \begin{array}{ll}
       (p_2)_{i,j}-(p_2)_{i,j-1}  & \text{if } 1< j<N \\  (p_2)_{i,j}  & \text{if } j=1\\
         -(p_2)_{i,j-1} &  \text{if } j=N.
    \end{array}\right.\nonumber
\end{align}
\end{enumerate}
\end{remark}
\subsubsection{Numerical results} Set $N=256$, let $x^{*}$ be the image \textquotedblleft boat\textquotedblright\ in the library Numerical tours \cite{peyre2011numerical}. We suppose that $K$ is an operator assigning to every pixel the average of the pixels in a neighborhood of radius 8 and that $e\thicksim U[-0.025,0.025]^{N^2}$. We use the original image as exact solution. For denoising and deblurring, we early stop the procedure at the iteration minimizing the mean square error (MSE), namely $\|x^k-x^*\|^2/N^2$, and we measure the time and the number of iterations needed to reach it. Another option for early stopping could be to consider the image with minimal structural similarity (SSIM). Numerically, in our experiments, this gives the same results. Additionally, we use the peak signal-to-noise ratio (PSNR) to compare the images.
Note that primal-dual algorithm with preconditioning is the method that needs less time and iterations among all the procedures. Moreover, due to \cite[Lemma 2]{chambolle2011first}, condition \eqref{c: ConditionL D1} is automatically satisfied, while for the other methods we need to check it explicitly, which is computationally costly. However, (PPD) is the worst in terms of SSIM, PNSR, and MSE. We verify that all other algorithms have a superior performance in terms of reconstruction, with a small advantage for the Landweber with fixed and adaptive step-sizes, reducing the MSE of $94\%$ with respect the noisy image. In addition, compared to (PD), (PDL) and (PDAL) require less iterations and time to satisfy the early stopping criterion. We believe that this is due to the fact that the extra Landweber operator improves the feasibility of the primal iterates. Visual assessment of the denoised and deblurred images are shown in Figure \ref{fig: Comparision_TV}, that highlights the regularization properties achieved by the addition of the Landweber operator and confirms  the previous conclusions.
 \begin{table}[ht]\centering
\resizebox{0.75\textwidth}{!}{%
\begin{tabular}{|l|l|l|l|l|l|}
\hline
                   & Iterations & Time   & SSIM   & PNSR    & MSE    \\ \hline
Noisy image        & -          & -      & 0.4468 & 21.4801 & 0.0071 \\ \hline
PD       & 54         & 8.9773 & 0.8928 & 32.3614 & 0.0006 \\ \hline
\begin{tabular}[c]{@{}l@{}}PD with\\ preconditioning\end{tabular} & 5 & 1.5515 & 0.8581 & 27.3753 & 0.0018 \\ \hline
PDL         & 46         & 7.1846 & 0.9066 & 34.2174 & 0.0004 \\ \hline
PDAL & 31         & 5.4542 & 0.9112 & 34.3539 & 0.0004 \\ \hline
\end{tabular}%
} \caption{Quantitative comparison of the algorithms in terms of Structural similarity (SSIM), peak signal-to-noise ratio (PSNR), Mean square error (MSE), time, and iterations to reach the early stopping.}
     \label{tab: Tabla 2}
\end{table}
\begin{figure}[ht]
    \centering
    \includegraphics[scale=0.40]{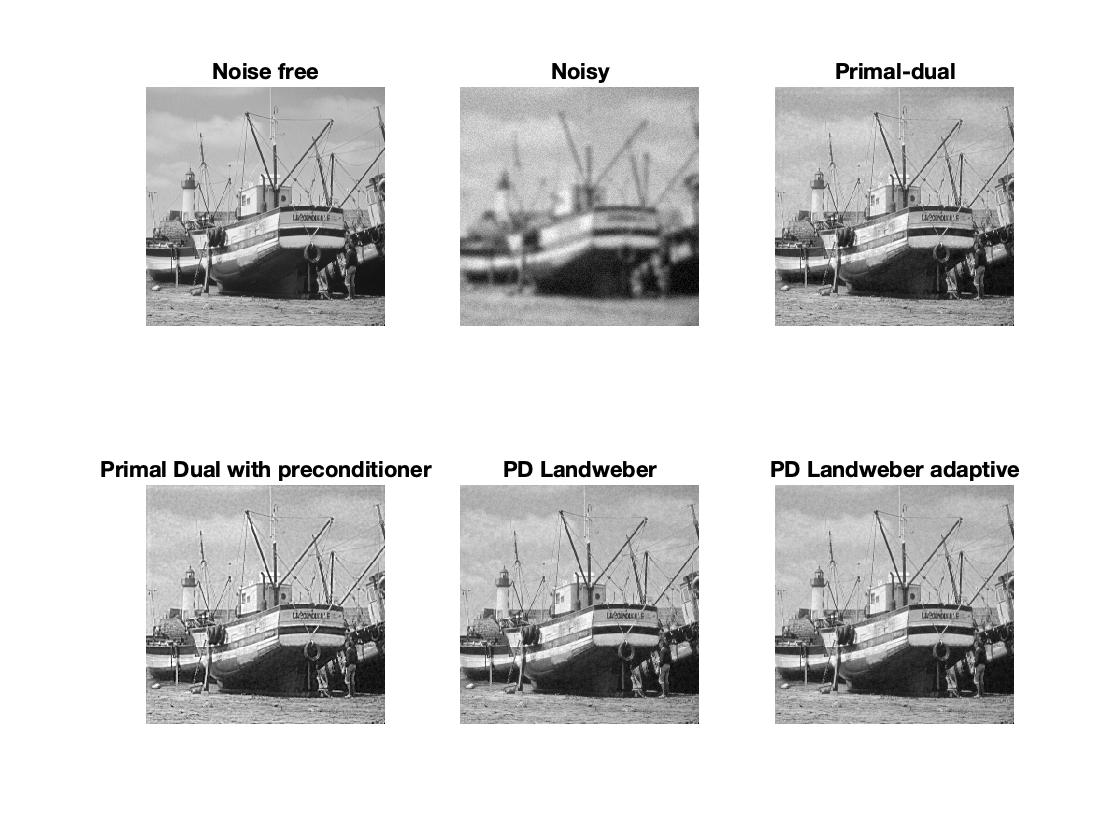}
    \caption{ Qualitative comparison of the 4 proposed methods.}
    \label{fig: Comparision_TV}
\end{figure}
\section{Conclusion and Future Work}
\label{s: Conclusion}
In this paper we studied two  new  iterative  regularization methods for solving a linearly constrained minimization problem, based on an extra activation step reusing the the data constraint. The analysis was carried out in the context of convex functions and  worst-case deterministic noise.  We proposed five instances of our  algorithm and compared their numerical performance with state of the art methods and we observed considerable improvement in run-time.

In the future, we would like to extend Theorem~\ref{Th:PPD} to structured convex problems and more specific algorithms. Possible extensions are:
1) the study of problems including, in the objective function, a $L$-smooth term and a composite linear term; 2) the analysis of random updates in the dual variable (see \cite{chambolle2018stochastic}) and stochastic approximations for the gradient; 3) the theoretical study of the impact of different preconditioners; 4) the improvement of the convergence and stability rates for strongly convex objective functions.
\section{Acknowledgement}
This project has been supported by TraDE-OPT project which received funding from the European Union’s Horizon 2020 research and innovation program under the Marie Skłodowska-Curie grant agreement No 861137. L.R. acknowledges support from the Center for Brains, Minds and Machines (CBMM), funded by NSF STC award CCF-1231216. L.R. also acknowledges the financial support of the European Research Council (grant SLING 819789), the AFOSR projects FA9550-18-1-7009, FA9550-17-1-0390 and BAA-AFRL-AFOSR-2016-0007 (European Office of Aerospace Research and Development), and the EU H2020-MSCA-RISE project NoMADS - DLV-777826. C. M. e S. V. are members of the INDAM-GNAMPA research group.  This work represents only the view of the authors. The European Commission and the other organizations are not responsible for any use that may be made of the information it contains.

\section{Proofs}
\subsection{Equivalence between Primal-dual and Dual-primal algorithms.}\label{Proof:PD=Dp}
In the following lemma we establish that, if $T=\Id$ and the initialization is the same, then there is an equivalence between the $k$-th primal variable of \ref{A: PDSP} and \ref{A: DPSP}, denoted by $\dal^{k}_{PD}$ and $\dal^{k}_{DP}$, respectively.

\begin{lemma}
\label{L: PD=DP}
Let  $(\proj^0_{PD},\bar{\proj}^{0}_{PD},\dal^0_{PD})\in \RR^{2p}\times\RR^{d}$ and $(\prim_{DP}^{0},\prop_{DP}^0,\bar{\prop}_{DP}^{0})\in \RR^{p}\times\RR^{2d}$ the initialization \ref{A: PDSP} and \ref{A: DPSP}, respectively, in the case when $m=1$ and $T=\Id$. Suppose that $\proj_{PD}^0=\bar{\proj}_{PD}^{0}$, $\prop_{DP}^0=\bar{\prop}_{DP}^{0}$, $\dal_{P    D}^0=\prop_{DP}^{0}$, and $\prim_{PD}^{1}=\prim_{DP}^{1}$, then for every $k\in\NN$, $\prim^{k}_{PD}=\prim^{k}_{DP}$.
\end{lemma}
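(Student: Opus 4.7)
The plan is to prove the lemma by induction, but the right inductive claim is not just $\prim^k_{PD}=\prim^k_{DP}$; one also needs to keep track of the relation between the dual variables. The crucial observation is that when $T=\Id$, the dual variable generated by PDA at step $k+1$ coincides with the \emph{extrapolated} dual variable of DPA at step $k$. First I would simplify both iterations under $T=\Id$: in PDA we get $\proj^k_{PD}=\prim^k_{PD}$ for $k\geq 1$, hence $\bar{\proj}^k_{PD}=2\prim^k_{PD}-\prim^{k-1}_{PD}$ (with $\bar{\proj}^0_{PD}=\proj^0_{PD}$), while in DPA we get $\prop^k_{DP}=\dal^k_{DP}$ for $k\geq 1$ and, combining with the dual update $\dal^k_{DP}=\dal^{k-1}_{DP}+\Gamma(A\prim^k_{DP}-b^\delta)$, one obtains the convenient closed form
\[
\bar{\prop}^k_{DP}=\dal^{k-1}_{DP}+2\Gamma(A\prim^k_{DP}-b^\delta)\qquad(k\geq 2),\qquad \bar{\prop}^1_{DP}=\prop^0_{DP}+2\Gamma(A\prim^1_{DP}-b^\delta).
\]

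Then I would prove by induction on $k\geq 1$ the joint statement
\[
(H_k):\qquad \prim^k_{PD}=\prim^k_{DP}\quad\text{and}\quad \dal^{k+1}_{PD}=\bar{\prop}^k_{DP}.
\]
The base case $k=1$ uses the given identity $\prim^1_{PD}=\prim^1_{DP}$, plus the direct computation
\[
\dal^2_{PD}=\dal^0_{PD}+\Gamma(A\proj^0_{PD}-b^\delta)+\Gamma A(2\prim^1_{PD}-\proj^0_{PD})-\Gamma b^\delta=\dal^0_{PD}+2\Gamma(A\prim^1_{PD}-b^\delta),
\]
which matches $\bar{\prop}^1_{DP}$ thanks to the hypothesis $\dal^0_{PD}=\prop^0_{DP}$. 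For the inductive step, assume $(H_k)$. Then the primal update gives immediately
\[
\prim^{k+1}_{PD}=\prox^{\pSigma}_J(\prim^k_{PD}-\pSigma A^*\dal^{k+1}_{PD})=\prox^{\pSigma}_J(\prim^k_{DP}-\pSigma A^*\bar{\prop}^k_{DP})=\prim^{k+1}_{DP}.
\]
For the dual part, I would expand
\[
\dal^{k+2}_{PD}=\bar{\prop}^k_{DP}+\Gamma A(2\prim^{k+1}_{DP}-\prim^k_{DP})-\Gamma b^\delta
\]
and substitute the closed form of $\bar{\prop}^k_{DP}$; after regrouping, the terms involving $\prim^k_{DP}$ combine with $\dal^{k-1}_{DP}$ (or $\prop^0_{DP}$ for $k=1$) to yield exactly $\dal^k_{DP}$ by the DPA dual recursion, leaving $\dal^{k+2}_{PD}=\dal^k_{DP}+2\Gamma(A\prim^{k+1}_{DP}-b^\delta)=\bar{\prop}^{k+1}_{DP}$.

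The main obstacle is conceptual rather than computational: identifying the correct auxiliary invariant $\dal^{k+1}_{PD}=\bar{\prop}^k_{DP}$, since one cannot expect $\dal^k_{PD}=\dal^k_{DP}$ (this already fails at $k=1$ because $\dal^1_{PD}$ uses $\proj^0_{PD}$ while $\dal^1_{DP}$ uses $\prim^1_{DP}$). Once this shift-by-one correspondence is in place, the two iterations line up perfectly: the Gamma-weighted residual accumulated through the PDA extrapolation $\bar{\proj}^{k}_{PD}$ is absorbed into the dual-side extrapolation $\bar{\prop}^k_{DP}$ of DPA. A minor bookkeeping care is needed at $k=1$, where $\bar{\prop}^1_{DP}$ involves $\prop^0_{DP}$ rather than a previous $\dal^0_{DP}$, but both cases are handled uniformly by the assumption $\dal^0_{PD}=\prop^0_{DP}$.
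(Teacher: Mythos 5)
Your proof is correct and follows essentially the same route as the paper: both arguments reduce the claim to the observation that the dual quantity entering the primal prox step is the same in the two algorithms, namely $\dal^{k+1}_{PD}=\bar{\prop}^{k}_{DP}$. The paper establishes this identity by telescoping both sides into the same explicit closed form $\dal^{0}_{PD}+\Gamma(A\prim^{k}-b^{\delta})+\Gamma\sum_{i=1}^{k}(A\prim^{i}-b^{\delta})$, whereas you verify it by a local induction on the joint invariant; the two verifications are interchangeable.
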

\begin{proof}
Since $m=1$ and $T=\Id$ in both algorithms, for every $k\in\NN$, yields $\prim_{PD}^{k}=\proj_{PD}^{k}$ and $\dal_{DP}^{k}=\prop_{DP}^{k}$. On one hand, by definition of \ref{A: PDSP}, we have that
 \begin{align}
     \dal^{k+1}_{PD}&=\dal^{1}_{PD}+\Gamma\sum_{i=1}^{k}\left(A\bar{\proj}^{i}_{PD}-b^{\delta}\right)\nonumber\\&=\dal^{1}_{PD}+\sum_{i=1}^{k}\Gamma A(\proj^{i}_{PD}-\proj^{i-1}_{PD})+\Gamma\sum_{i=1}^{k}\left(A\prim^{i}_{PD}-b^{\delta}\right)\nonumber\\&=\dal^{1}_{PD}+\Gamma A(\proj^{k}_{PD}-\proj^{0}_{PD})+\Gamma\sum_{i=1}^{k}\left(A\prim^{i}_{PD}-b^{\delta}\right)\nonumber\\&=\dal^{0}_{PD}+\Gamma(A\prim^{k}_{PD}-b^\delta)+\Gamma\sum_{i=1}^{k}\left(A\prim^{i}_{PD}-b^{\delta}\right),\label{V: Uprimal}
 \end{align}
 where in the last equality is obtained since $\proj_{PD}^0=\bar{\proj}_{PD}^{0}$. 
 Replacing \eqref{V: Uprimal} in the definition of $x^{k+1}_{PD}$
 \begin{align}
   \prim_{PD}^{k+1}= \prox^{\pSigma }_{J}\left(\prim_{PD}^{k}-\pSigma A^{*}\left(\dal_{PD}^{0}+\Gamma (A\prim_{PD}^{k}-b^\delta)+\Gamma\sum_{i=1}^{k}\left(A\prim_{PD}^{i}-b^{\delta}\right)\right)\right).
     \label{e: algonestep}
 \end{align}
 On the other hand, by \ref{A: DPSP} we have that
 \begin{align}
     \dal^{k+1}_{DP}=\prop^{k+1}_{DP}=\prop^{0}_{DP}+\Gamma\sum_{i=1}^{k+1}\left(A\prim^{i}_{DP}-b^{\delta}\right),\label{V: Udual}
 \end{align}
and 
\begin{align}
    \bar{\prop}^{k}_{DP}=\prop^{k}_{DP}+\dal^{k}_{DP}-\prop^{k-1}_{DP}=\prop^{0}_{DP}+\Gamma(A\prim^{k}_{DP}-b^\delta)+\Gamma\sum_{i=1}^{k}\left(A\prim^{i}_{DP}-b^{\delta}\right),\label{V: UbarDual}.
\end{align}
Replacing \eqref{V: UbarDual} in \ref{A: DPSP}, for every $k>1$, we can deduce that
 \begin{align}
   \prim_{DP}^{k+1}= \prox^{\pSigma }_{J}\left(\prim_{DP}^{k}-\pSigma A^{*}\left(\prop_{DP}^{0}+\Gamma (A\prim_{DP}^{k}-b^\delta)+\Gamma\sum_{i=1}^{k}\left(A\prim_{DP}^{i}-b^{\delta}\right)\right)\right).
     \label{e: algonestep1}
 \end{align}
Since $\dal^{0}_{PD}=\prop^{0}_{DP}$ and $\prim^{1}_{PD}=\prim^{1}_{DP}$ the result follows by induction.
  \end{proof}
   \begin{remark}
An analysis similar to that in the proof of Lemma \ref{L: PD=DP} shows that
 \begin{align}
   \prim_{PD}^{k+1}= \prox^{\pSigma }_{J}\left(\prim^{k}_{PD}-\pSigma A^{*}\left(\dal^{0}_{PD}+\Gamma (A
     T_{\epsilon_{k}}\prim^{k}_{PD}-b^\delta)+\Gamma\sum_{i=1}^{k}\left(A\prim^{i}_{PD}-b^{\delta}\right)\right)\right),
     \label{e: algonestepproj}
 \end{align}
 which implies that the algorithm can be written in one step if we only care about the primal variable. 
 \end{remark}
\subsection{Proof of Theorem \ref{Th:PPD}}\label{Proof:PPD}
\begin{proof}
From \ref{A: PDSP}, we deduce that: 
\begin{align}
    \pSigma^{-1}(\proj^k-\prim^{k+1})- A^*\dal^{k+1}&\in\partial J(\prim^{k+1})\nonumber\\
   \Gamma^{-1}(\dal^k-\dal^{k+1}) +A\bar{\proj}^k &=b^{\delta}\label{e:PMI}
\end{align}
Therefore, we have
\begin{align}
   \left(\forall x\in\RR^p\right)\hspace{3mm} J(\prim^{k+1})+\scal{\pSigma^{-1}(\proj^k-\prim^{k+1})- A^*\dal^{k+1}}{\prim-\prim^{k+1}}\leq J(\prim)
    \label{e:subdif P}
\end{align} and \eqref{e:subdif P} yields
\begin{align}
         0\geq& J(\prim^{k+1})-J(\prim)+\scal{\pSigma^{-1}(\proj^k-\prim^{k+1})-A^*\dal^{k+1}}{\prim-\prim^{k+1}}\nonumber\\=& J(\prim^{k+1})-J(\prim)+\frac{\|\proj^k-\prim^{k+1}\|_{\pSigma^{-1}}^2}{2}+\frac{\|\prim^{k+1}-\prim\|_{\pSigma^{-1}}^2}{2}\nonumber\\&-\frac{\|\proj^k-\prim\|_{\pSigma^{-1}}^2}{2}+\scal{\prim^{k+1}-\prim}{A^*\dal^{k+1}} 
                 \label{e: psub P}
\end{align}
Analogously by \eqref{e:PMI} we get
\begin{align}
    0=&\scal{\Gamma^{-1}(\dal^k-\dal^{k+1})+ A\bar{\proj}^k-b^\delta}{\dal-\dal^{k+1}}\nonumber\\0=& \frac{\|\dal^{k+1}-\dal^{k}\|_{\Gamma^{-1}}^2}{2}+\frac{\|\dal^{k+1}-\dal\|_{\Gamma^{-1}}^2}{2}-\frac{\|\dal^{k}-\dal\|_{\Gamma^{-1}}^2}{2}+\scal{b^{\delta}-A\bar{\proj}^k}{\dal^{k+1}-\dal}\label{e: dsub P}
\end{align}
Recall that $z:=(\prim,\dal)\in\mathcal{Z}\subset C\times \RR^{d}$, $z^{k}:=(\prim^{k},\dal^{k})$, and $V(z):=\frac{\|\prim\|^2_{\pSigma^{-1}}}{2}+\frac{\|\dal\|_{\Gamma^{-1}}^2}{2}$. Summing \eqref{e: psub P} and  \eqref{e: dsub P}, and by Assumption  A3, we obtain \\
\begin{align}
          J(\prim^{k+1})-J(\prim)+\frac{\|\prim^{k+1}-\proj^k\|_{\pSigma^{-1}}^2}{2} +\frac{\|\dal^{k+1}-\dal^{k}\|_{\Gamma^{-1}}^2}{2}+V(z^{k+1}-z)-V(z^{k}-z)&\nonumber\\+\scal{A(\prim^{k+1}-\prim)}{\dal^{k+1}}+\scal{b^\delta-A\bar{\proj}^k}{\dal^{k+1}-\dal}-\frac{e\delta^2}{2}&\leq 0\label{e: prestimate P}
    \end{align}
    Now compute 
    \begin{align}
        & J(\prim^{k+1})-J(\prim)+\scal{A(\prim^{k+1}-\prim)}{\dal^{k+1}} +\scal{b^{\delta}-A\bar{\proj}^k}{\dal^{k+1}-\dal}\nonumber\\ =&\mathcal{L}(\prim^{k+1},\dal)-\mathcal{L}(\prim,\dal^{k+1})-\scal{A\prim^{k+1}-b}{\dal}+\scal{A\prim-b}{\dal^{k+1}}\nonumber\\&+\scal{A(\prim^{k+1}-\prim)}{\dal^{k+1}} +\scal{b^{\delta}-A\bar{\proj}^k}{\dal^{k+1}-\dal}\nonumber\\ =&\mathcal{L}(\prim^{k+1},\dal)-\mathcal{L}(\prim,\dal^{k+1})-\scal{A\prim^{k+1}}{\dal}+\scal{b}{\dal}+\scal{A\prim}{\dal^{k+1}}-\scal{b}{\dal^{k+1}}\nonumber\\&+\scal{A\prim^{k+1}}{\dal^{k+1}}-\scal{A\prim}{\dal^{k+1}} +\scal{b^\delta}{\dal^{k+1}-\dal}-\scal{A\bar{\proj}^k}{\dal^{k+1}-\dal}\nonumber\\ =&\mathcal{L}(\prim^{k+1},\dal)-\mathcal{L}(\prim,\dal^{k+1})+\scal{b^{\delta}-b}{\dal^{k+1}-\dal} +\scal{A\prim^{k+1}-A\bar{\proj}^k}{\dal^{k+1}-\dal}\nonumber\\ \geq&\mathcal{L}(\prim^{k+1},\dal)-\mathcal{L}(\prim,\dal^{k+1})-\delta\|\Gamma^{\frac{1}{2}}\|\|\dal^{k+1}-\dal\|_{\Gamma^{-1}} +\scal{A\prim^{k+1}-A\bar{\proj}^k}{\dal^{k+1}-\dal}.
        \label{e: lagrange P}
    \end{align}
   From \eqref{e: lagrange P} and \eqref{e: prestimate P} we obtain
\begin{align}
         &\mathcal{L}(\prim^{k+1},\dal)-\mathcal{L}(\prim,\dal^{k+1})+\frac{\|\prim^{k+1}-\proj^k\|_{\pSigma^{-1}}^2}{2} +\frac{\|\dal^{k+1}-\dal^{k}\|_{\Gamma^{-1}}^2}{2}\nonumber\\&+V(z^{k+1}-z)-V(z^{k}-z)-\delta\|\Gamma^{\frac{1}{2}}\|\|\dal^{k+1}-\dal\|_{\Gamma^{-1}}-\frac{e\delta^2}{2}\nonumber\\ \leq& -\scal{A(\prim^{k+1}-\bar{\proj}^k)}{\dal^{k+1}-\dal}\label{e: lagrangeI.25 P} \\ = & -\scal{A(\prim^{k+1}-\proj^k)}{\dal^{k+1}-\dal}+\scal{A(\prim^{k}-\proj^{k-1})}{\dal^{k}-\dal}\nonumber\\ &+\scal{A(\prim^{k}-\proj^{k-1})}{\dal^{k+1}-\dal^{k}}\label{e: lagrangeI.5 P} \\ = & -\scal{A(\prim^{k+1}-\proj^k)}{\dal^{k+1}-\dal}+\scal{A(\prim^{k}-\proj^{k-1})}{\dal^{k}-\dal}\nonumber\\ &+\scal{\gamedio A\pSigma^{\frac{1}{2}}\pSigma^{-\frac{1}{2}}(\prim^{k}-\proj^{k-1})}{\gamediomenos(\dal^{k+1}-\dal^{k})}\nonumber\\  \leq& -\scal{A(\prim^{k+1}-\proj^k)}{\dal^{k+1}-\dal}+\scal{A(\prim^{k}-\proj^{k-1})}{\dal^{k}-\dal}\nonumber\\ &+\|\gamedio A\pSigma^{\frac{1}{2}}\|^2\frac{\|\dal^{k+1}-\dal^{k}\|_{\Gamma^{-1}}^2}{2}+\frac{\|\prim^{k}-\proj^{k-1}\|_{\pSigma^{-1}}^2}{2}\label{e: lagrangeIP}
    \end{align}
   Then, recalling that  $\alpha=1-\|\gamedio A\pSigma^{\frac{1}{2}}\|^2$,  we have the following estimate
\begin{align}
        &\mathcal{L}(\prim^{k+1},\dal)-\mathcal{L}(\prim,\dal^{k+1})+\frac{\|\prim^{k+1}-\proj^{k}\|_{\pSigma^{-1}}^2}{2}-\frac{\|\prim^{k}-\proj^{k-1}\|_{\pSigma^{-1}}^2}{2}\nonumber\\&+\frac{\alpha}{2}\|\dal^{k+1}-\dal^{k}\|_{\Gamma^{-1}}^2+V(z^{k+1}-z)-V(z^{k}-z)\nonumber\\ \leq& \delta\|\Gamma^{\frac{1}{2}}\|\|\dal^{k+1}-\dal\|_{\Gamma^{-1}} -\scal{A(\prim^{k+1}-\proj^k)}{\dal^{k+1}-\dal}\nonumber\\&+\scal{A(\prim^{k}-\proj^{k-1})}{\dal^{k}-\dal}+\frac{e\delta^2}{2}
        \label{e: lagrangeII P}
    \end{align}
      Summing from $1$ to $N-1$ we obtain 
    \begin{align}
        &\sum_{k=1}^{N-1}\left(\mathcal{L}(\prim^{k+1},\dal)-\mathcal{L}(\prim,\dal^{k+1})\right)+\frac{\|\prim^{N}-\proj^{N-1}\|_{\pSigma^{-1}}^2}{2}-\frac{\|\prim^{1}-\proj^{0}\|_{\pSigma^{-1}}^2}{2}\nonumber\\&+\frac{\alpha}{2}\sum_{k=1}^{N-1}\|\dal^{k+1}-\dal^{k}\|_{\Gamma^{-1}}^2+V(z^{N}-z)-V(z^{1}-z)-\scal{ A(\prim^{1}-\proj^{0})}{\dal^{1}-\dal} \nonumber\\\leq& \delta\|\Gamma^{\frac{1}{2}}\|\sum_{k=1}^{N}\|\dal^{k}-\dal\|_{\Gamma^{-1}}-\scal{\gamedio A\pSigma^{\frac{1}{2}}\pSigma^{-\frac{1}{2}}(\prim^{N}-\proj^{N-1})}{\gamediomenos(\dal^{N}-\dal)}+\frac{(N-1) e\delta^2}{2}\nonumber\\\leq&\delta\|\Gamma^{\frac{1}{2}}\|\sum_{k=1}^{N}\|\dal^{k}-\dal\|_{\Gamma^{-1}}+\frac{\|\prim^{N}-\proj^{N-1}\|_{\pSigma^{-1}}^2}{2}+\|\gamedio A\pSigma ^{\frac{1}{2}}\|^2\frac{\|\dal^{N}-\dal\|_{\Gamma^{-1}}^2}{2}+\frac{(N-1) e\delta^2}{2}
        \label{e: lagrangeIII.5 P}
    \end{align}
    Now, by choosing $k=1$ in \eqref{e: lagrangeI.25 P} we get
    \begin{align}
         &\mathcal{L}(\prim^{1},\dal)-\mathcal{L}(\prim,\dal^{1})+\frac{\|\prim^{1}-\proj^0\|_{\pSigma^{-1}}^2}{2} +\frac{\alpha}{2}\|\dal^{1}-\dal^{0}\|_{\Gamma^{-1}}^2\nonumber\\&+V(z^{1}-z)-V(z^{0}-z) +\scal{A(\prim^{1}-\bar{\proj}^0)}{\dal^{1}-\dal}\nonumber\\ \leq&\delta\|\Gamma^{\frac{1}{2}}\|\|\dal^{1}-\dal\|_{\Gamma^{-1}}+\frac{e\delta^2}{2}.
          \label{e: lagrangeIII.5 P1}
         \end{align}
   Adding \eqref{e: lagrangeIII.5 P} and \eqref{e: lagrangeIII.5 P1} we obtain
     \begin{align}
        &\sum_{k=0}^{N-1}\left(\mathcal{L}(\prim^{k+1},\dal)-\mathcal{L}(\prim,\dal^{k+1})\right)+\frac{\alpha}{2}\|\dal^{N}-\dal\|_{\Gamma^{-1}}^2\nonumber\\&+\sum_{k=1}^{N}\frac{\alpha}{2}\|\dal^{k}-\dal^{k-1}\|_{\Gamma^{-1}}^2+\frac{\|\prim^{N}-\prim\|_{\pSigma^{-1}}^2}{2} \nonumber\\\leq& \delta\|\Gamma^{\frac{1}{2}}\|\sum_{k=1}^{N}\|\dal^{k}-\dal\|_{\Gamma^{-1}}+V(z^{0}-z)+\frac{N e\delta^2}{2}
        \label{e: lagrangeIV.5 P}
    \end{align}
Next, by \eqref{e: lagrangeI.5 P} we have the following estimate
\begin{align}
        &\frac{\|\prim^{k+1}-\proj^{k}\|_{\pSigma^{-1}}^2}{2}-\scal{A(\prim^{k}-\proj^{k-1})}{\dal^{k+1}-\dal^k}+\frac{\|\dal^{k+1}-\dal^k\|_{\Gamma^{-1}}^2}{2}\nonumber\\&+\mathcal{L}(\prim^{k+1},\dal)-\mathcal{L}(\prim,\dal^{k+1})+V(z^{k+1}-z)-V(z^{k}-z)\nonumber\\ \leq& \delta\|\Gamma^{\frac{1}{2}}\|\|\dal^{k+1}-\dal\|_{\Gamma^{-1}} -\scal{A(\prim^{k+1}-\proj^{k})}{\dal^{k+1}-\dal} \nonumber\\&+\scal{A(\prim^{k}-\proj^{k-1})}{\dal^{k}-\dal}+\frac{e\delta^2}{2}
        \label{e: lagrangeII.5 P}
    \end{align}
       Summing from $1$ to $N-1$  we obtain 
    \begin{align}
         &\sum_{k=1}^{N-1}\left(\frac{\|\prim^{k+1}-\proj^{k}\|_{\pSigma^{-1}}^2}{2}-\scal{A(\prim^{k}-\proj^{k-1})}{\dal^{k+1}-\dal^{k}} +\frac{\|\dal^{k+1}-\dal^{k}\|_{\Gamma^{-1}}^2}{2}\right)\nonumber\\&+\sum_{k=1}^{N-1}\left(\mathcal{L}(\prim^{k+1},\dal)-\mathcal{L}(\prim,\dal^{k+1})\right)+V(z^{N}-z)-V(z^{1}-z) -\scal{A(\prim^{1}-\proj^{0})}{\dal^{1}-\dal}\nonumber\\
         \leq& \delta\|\Gamma^{\frac{1}{2}}\|\sum_{k=1}^{N-1}\|\dal^{k+1}-\dal\|_{\Gamma^{-1}}  -\scal{A(\prim^{N}-\proj^{N-1})}{\dal^{N}-\dal}+\frac{(N-1) e\delta^2}{2}\nonumber\\
         =& \delta\|\Gamma^{\frac{1}{2}}\|\sum_{k=1}^{N-1}\|\dal^{k+1}-\dal\|_{\Gamma^{-1}}  -\scal{\gamedio A\pSigma^{\frac{1}{2}}\pSigma^{-\frac{1}{2}}(\prim^{N}-\proj^{N-1})}{\gamediomenos(\dal^{N}-\dal)}+\frac{(N-1) e\delta^2}{2}\nonumber\\\leq& \delta\|\Gamma^{\frac{1}{2}}\|\sum_{k=1}^{N-1}\|\dal^{k+1}-\dal\|_{\Gamma^{-1}} +\frac{ \|\gamedio A\pSigma^\frac{1}{2}\|^2}{2}\|\prim^{N}-\proj^{N-1}\|_{\pSigma^{-1}}^2+\frac{\|\dal^{N}-\dal\|_{\Gamma^{-1}}^2}{2}+\frac{(N-1) e\delta^2}{2}
        \label{e: lagrangeIII P}
    \end{align}
    Now, since $\dal^{k+1}-\dal^{k}=\gammmadot\left(A\bar{\proj}^k-b^\delta\right)$ we derive that 
    \begin{align}
     &\sum_{k=1}^{N-1}\left(\frac{\|\prim^{k+1}-\proj^{k}\|_{\pSigma^{-1}}^2}{2}-\scal{A(\prim^{k}-\proj^{k-1})}{\dal^{k+1}-\dal^{k}} +\frac{\|\dal^{k+1}-\dal^{k}\|_{\Gamma^{-1}}^2}{2}\right)\nonumber\\ =&  \sum_{k=1}^{N-1}\left(\frac{\|\prim^{k}-\proj^{k-1}\|_{\pSigma^{-1}}^2}{2}-\scal{A(\prim^{k}-\proj^{k-1})}{\dal^{k+1}-\dal^{k}}+\frac{\|\dal^{k+1}-\dal^{k}\|_{\Gamma^{-1}}^2}{2}\right)\nonumber\\&+ \frac{\|\prim^{N}-\proj^{N-1}\|_{\pSigma^{-1}}^2}{2}-\frac{\|\prim^{1}-\proj^{0}\|_{\pSigma^{-1}}^2}{2}\nonumber\\=&  \sum_{k=1}^{N-1}\left(\frac{\|\gamedio A(\prim^{k}-\proj^{k-1})\|^2}{2}-\scal{\Gamma^{\frac{1}{2}}A(\prim^{k}-\proj^{k-1})}{\Gamma^{\frac{1}{2}}(A\bar{\proj}^k-b^{\delta})}+\frac{\|\gamedio(A\bar{\proj}^k-b^{\delta})\|^2}{2}\right)\nonumber\\
   &+ \frac{\|\prim^{N}-\proj^{N-1}\|_{\pSigma^{-1}}^2}{2}-\frac{\|\prim^{1}-\proj^{0}\|
   ^2}{2}\nonumber\\&+\sum_{k=1}^{N-1}\left(\frac{\|\prim^{k}-\proj^{k-1}\|_{\pSigma^{-1}}^2}{2}-\frac{\|\gamedio A(\prim^{k}-\proj^{k-1})\|^2}{2}\right)\nonumber\\
   =&  \sum_{k=1}^{N-1}\frac{\|\gamedio ( A\proj^{k}-b^{\delta})\|^2}{2}+ \frac{\|\prim^{N}-\proj^{N-1}\|_{\pSigma^{-1}}^2}{2}-\frac{\|\prim^{1}-\proj^{0}\|_{\pSigma^{-1}}^2}{2}\nonumber\\&+\sum_{k=1}^{N-1}\left(\frac{\|\prim^{k}-\proj^{k-1}\|_{\pSigma^{-1}}^2}{2}-\frac{\|\gamedio A(\prim^{k}-\proj^{k-1})\|^2}{2}\right),\end{align}
  and since $\alpha=1-\|\gamedio A\pSigma^{\frac{1}{2}}\|^2>0$ we obtain
   \begin{align}
   & \sum_{k=1}^{N-1}\frac{\|\gamedio ( A\proj^{k}-b^{\delta})\|^2}{2}+ \frac{\|\prim^{N}-\proj^{N-1}\|_{\pSigma^{-1}}^2}{2}-\frac{\|\prim^{1}-\proj^{0}\|_{\pSigma^{-1}}^2}{2}\nonumber\\&+\sum_{k=1}^{N-1}\left(\frac{\|\prim^{k}-\proj^{k-1}\|_{\pSigma^{-1}}^2}{2}-\frac{\|\gamedio A(\prim^{k}-\proj^{k-1})\|^2}{2}\right) \nonumber\\
   \geq&  \sum_{k=1}^{N-1}\frac{\|\gamedio (A\proj^{k}-b^{\delta})\|^2}{2}+ \frac{\|\prim^{N}-\proj^{N-1}\|_{\pSigma^{-1}}^2}{2}-\frac{\|\prim^{1}-\proj^{0}\|_{\pSigma^{-1}}^2}{2}\nonumber\\ &+\frac{\alpha}{2}\sum_{k=1}^{N-1}\|\gamedio A(\prim^{k}-\proj^{k-1})\|^2\nonumber\\
   \geq&  \sum_{k=1}^{N-1}\frac{\|\gamedio ( A\proj^{k}-b^{\delta})\|^2}{2}+ \frac{\|\prim^{N}-\proj^{N-1}\|_{\pSigma^{-1}}^2}{2}-\frac{\|\prim^{1}-\proj^{0}\|_{\pSigma^{-1}}^2}{2}\nonumber\\&-\frac{\alpha}{2}\|\gamedio A(\prim^{N}-\proj^{N-1})\|^2+\frac{\alpha}{2}\|\gamedio A(\prim^{1}-\proj^{0})\|^2\nonumber\\
   &+\frac{\alpha}{2}\sum_{k=1}^{N-1}\|\gamedio A(\prim^{k+1}-\proj^{k})\|^2.\end{align}
  In turn, by convexity of $\|\cdot\|^2$ results in
   \begin{align}
   &  \sum_{k=1}^{N-1}\frac{\|\gamedio ( A\proj^{k}-b^{\delta})\|^2}{2}+ \frac{\|\prim^{N}-\proj^{N-1}\|_{\pSigma^{-1}}^2}{2}-\frac{\|\prim^{1}-\proj^{0}\|_{\pSigma^{-1}}^2}{2}\nonumber\\&-\frac{\alpha}{2}\|\gamedio A(\prim^{N}-\proj^{N-1})\|^2+\frac{\alpha}{2}\|\gamedio A(\prim^{1}-\proj^{0})\|^2\nonumber\\
   &+\frac{\alpha}{2}\sum_{k=1}^{N-1}\|\gamedio A(\prim^{k+1}-\proj^{k})\|^2\nonumber\\ \geq  &  \frac{\alpha}{4}\sum_{k=1}^{N-1}\|\gamedio ( A\prim^{k+1}-b^{\delta})\|^2-\frac{\|\prim^{1}-\proj^{0}\|_{\pSigma^{-1}}^2}{2}+\frac{\alpha}{2}\|\gamedio A(\prim^{1}-\proj^{0})\|^2\nonumber\\
   &+ \frac{\alpha^{2}+\|\gamedio A\pSigma^{\frac{1}{2}}\|^2}{2}\|\prim^{N}-\proj^{N-1}\|_{\pSigma^{-1}}^2\nonumber\\
   \geq& \frac{\alpha}{4}\sum_{k=2}^{N}\|\gamedio ( A\prim^{k}-b^{\delta})\|^2-\frac{\|\prim^{1}-\proj^{0}\|_{\pSigma^{-1}}^2}{2}+\frac{\alpha}{2}\|\gamedio A(\prim^{1}-\proj^{0})\|^2\nonumber\\
   &+ \frac{\alpha^{2}+\|\gamedio A\pSigma^{\frac{1}{2}}\|^2}{2}\|\prim^{N}-\proj^{N-1}\|_{\pSigma^{-1}}^2.\label{e: PX}
    \end{align}
On the other hand, we get
    \begin{align}
      \|\gamedio(A\prim^k-b^\delta)\|^2 \geq &\frac{\|A\prim^k-b^{\delta}\|^2}{\|\Gamma^{-1}\|}\nonumber\\\geq&\frac{1}{\|\Gamma^{-1}\|}\left(\frac{\|A\prim^k-b\|^2}{2}-\|b^\delta-b\|^2\right).
        \label{e: Axsep P}
    \end{align}
 Combining \eqref{e: lagrangeIII.5 P1}, 
          \eqref{e: lagrangeIII P}, \eqref{e: PX}, and \eqref{e: Axsep P} we have that
     \begin{align}
       &\sum_{k=0}^{N-1}\left(\mathcal{L}(\prim^{k+1},\dal)-\mathcal{L}(\prim,\dal^{k+1})\right)+\frac{\alpha^2}{2}\|\prim^{N}-\proj^{N-1}\|_{\pSigma^{-1}}^2\nonumber\\
       &\sum_{k=1}^{N}\frac{\alpha}{8\|\Gamma^{-1}\|}\|A\prim^{k+1}-b\|^2+\frac{\|\prim^{N}-\prim\|^2_{\pSigma^{-1}}}{2} \nonumber\\
       \leq& \delta\|\Gamma^{\frac{1}{2}}\|\sum_{k=1}^{N}\|\dal^{k}-\dal\|_{\Gamma^{-1}}+V(z^{0}-z)+\frac{N e\delta^2}{2}+N\frac{\alpha}{4\|\Gamma^{-1}\|}\delta^2
        \label{e: lagrangeIV P}
\end{align}
    It remains to bound $\delta\|\Gamma^{\frac{1}{2}}\|\sum_{k=1}^{N}\|\dal^{k}-\dal\|_{\Gamma^{-1}}$. From \eqref{e: lagrangeIV.5 P} and since $(x,u)$ is a saddle-point of the Lagrangian  we deduce that\\
    \begin{align}
     \|\dal^{N}-\dal\|^2\leq\frac{2\|\Gamma^{\frac{1}{2}}\|\delta}{\alpha}  \sum_{k=1}^{N}\|\dal^{k}-\dal\|+\frac{2 V(z^{0}-z)}{\alpha}+\frac{ N e\delta^2}{\alpha}.
        \label{e: U bound P}
    \end{align}
    Applying \cite[Lemma A.1]{rasch2020inexact} to Equation \eqref{e: U bound P} with $\lambda_{k}:=\frac{2\|\Gamma^{\frac{1}{2}}\|\delta}{\alpha} $ and $S_{N}:= \frac{2 V(z^{0}-z)}{\alpha}+\frac{N e\delta^2}{\alpha}$  we get
    \begin{align}
      \|\dal^{N}-\dal\|\leq& \frac{N\|\Gamma^{\frac{1}{2}}\|\delta}{\alpha}+\left(\frac{2 V(z^{0}-z)}{\alpha}+\frac{ N e\delta^2}{\alpha}+\left(\frac{N\|\Gamma^{\frac{1}{2}}\|\delta}{\alpha}\right)^2\right)^{\frac{1}{2}}\nonumber\\\leq& \frac{2N\|\Gamma^{\frac{1}{2}}\|\delta}{\alpha}+\left(\frac{2 V(z^{0}-z)}{\alpha}\right)^{\frac{1}{2}}+\left(\frac{ N e\delta^2}{\alpha}\right)^{\frac{1}{2}}
    \end{align}
    Insert the previous in Equation \eqref{e: lagrangeIV.5 P}, to obtain
     \begin{align}
        \sum_{k=0}^{N-1}\left(\mathcal{L}(\prim^{k+1},\dal)-\mathcal{L}(\prim,\dal^{k+1})\right)\leq& \frac{2(N\|\Gamma^{\frac{1}{2}}\|\delta)^{2}}{\alpha}+N\|\Gamma^{\frac{1}{2}}\|\delta\left(\frac{ V(z^{0}-z)}{\alpha}\right)^{\frac{1}{2}}\nonumber\\&+N\|\Gamma^{\frac{1}{2}}\|\delta\left(\frac{ N e\delta^2}{\alpha}\right)^{\frac{1}{2}} +V(z^{0}-z)+\frac{N e\delta^2}{2}
        \label{e: lagrangeV P}
    \end{align}
    Analogously from \eqref{e: lagrangeIV P}
    \begin{align}
         \sum_{k=1}^{N}\|A\prim^k-b\|^2\leq&\frac{16N^2\|\Gamma\|\|\Gamma^{-1}\|\delta^{2}}{\alpha^{2}}+8N\delta\|\Gamma^{-1}\|\left(\frac{2\|\Gamma\| V(z^{0}-z)}{\alpha^3}\right)^{\frac{1}{2}}+8N\delta^{2}\|\Gamma^{-1}\|\left(\frac{ \|\Gamma\|e N}{\alpha^3}\right)^{\frac{1}{2}\nonumber}\\
         &+\frac{8\|\Gamma^{-1}\|V(z^{0}-z)}{\alpha}+2N\delta^{2}+\frac{4N\|\Gamma^{-1}\|e\delta^2}{\alpha}
    \end{align}
    and both results are straightforward from the Jensen's inequality.\end{proof}
\subsection{Proof of Theorem \ref{Th:PDP}}\label{Proof:PDP}
\begin{proof}
It follows from  \ref{A: DPSP} that

\begin{align}
    \pSigma^{-1}(\prim^k-\prim^{k+1})-A^*\bar{\prop}^{k}\in\partial J(\prim^{k+1})\nonumber\\
  \Gamma^{-1}(\prop^k-\dal^{k+1})+A\prim^{k+1} =b^{\delta}
  \label{I: DPinclsuion}
\end{align}
Thus,\\
\begin{align}
    J(\prim^{k+1})+\scal{\pSigma^{-1}(\prim^k-\prim^{k+1})- A^*\bar{\prop}^{k}}{\prim-\prim^{k+1}}\leq J(\prim)
    \label{e:subdif D}
\end{align} and \eqref{e:subdif D} yields
\begin{align}
         0\geq& J(\prim^{k+1})-J(\prim)+\scal{\pSigma^{-1}(\prim^{k}-\prim^{k+1})-A^*\bar{\prop}^{k}}{\prim-\prim^{k+1}} \nonumber\\ =& J(\prim^{k+1})-J(\prim)+\frac{\|\prim^{k}-\prim^{k+1}\|_{\pSigma^{-1}}^2}{2}+\frac{\|\prim^{k+1}-\prim\|_{\pSigma^{-1}}^2}{2}\nonumber\\&-\frac{\|\prim^{k}-\prim\|_{\pSigma^{-1}}^2}{2}+\scal{\prim^{k+1}-\prim}{A^*\bar{\prop}^{k}} 
                 \label{e: psub D}
\end{align}
From \eqref{I: DPinclsuion}, it follows that
\begin{align}
    0=&\scal{\Gamma^{-1}(\prop^k-\dal^{k+1})+A\prim^{k+1}-b^\delta}{\dal-\dal^{k+1}}\nonumber\\  0=&\frac{\|\dal^{k+1}-\prop^{k}\|_{\Gamma^{-1}}^2}{2}+\frac{\|\dal^{k+1}-\dal\|_{\Gamma^{-1}}^2}{2}-\frac{\|\prop^{k}-\dal\|_{\Gamma^{-1}}^2}{2}+\scal{b^{\delta}-A\prim^{k+1}}{\dal^{k+1}-\dal}\label{e: dsub D}
\end{align}
Recall that $z:=(\prim,\dal)\in\mathcal{Z}\subset C\times \RR^{d}$, $z^{k}:=(\prim^{k},\dal^{k})$, and $V(z):=\frac{\|\prim\|^2_{\pSigma^{-1}}}{2}+\frac{\|\dal\|_{\Gamma^{-1}}^2}{2}$. Summing \eqref{e: psub D} and \eqref{e: dsub D}, we obtain 
\begin{align}
         J(\prim^{k+1})-J(\prim)+\frac{\|\prim^{k+1}-\prim^{k}\|_{\pSigma^{-1}}^2}{2} +\frac{\|\dal^{k+1}-\prop^{k}\|_{\Gamma^{-1}}^2}{2}+V(z^{k+1}-z)-V(z^{k}-z)&\nonumber\\+\scal{A(\prim^{k+1}-\prim)}{\bar{\prop}^{k}} +\scal{b^{\delta}-A\prim^{k+1}}{\dal^{k+1}-\dal}&\leq0\label{e: prestimate D}
    \end{align}
    Now compute 
     \begin{align}
        & J(\prim^{k+1})-J(\prim)+\scal{A(\prim^{k+1}-\prim)}{\bar{\prop}^{k}} +\scal{b^{\delta}-A\prim^{k+1}}{\dal^{k+1}-\dal}\nonumber\\ =&\mathcal{L}(\prim^{k+1},\dal)-\mathcal{L}(\prim,\dal^{k+1})-\scal{A\prim^{k+1}-b}{\dal}+\scal{A\prim-b}{\dal^{k+1}}\nonumber\\&+\scal{A(\prim^{k+1}-\prim)}{\bar{\prop}^{k}} +\scal{b^{\delta}-A\prim^{k+1}}{\dal^{k+1}-\dal}\nonumber\\ =&\mathcal{L}(\prim^{k+1},\dal)-\mathcal{L}(\prim,\dal^{k+1})-\scal{A\prim^{k+1}}{\dal}+\scal{b}{\dal}+\scal{A\prim}{\dal^{k+1}}-\scal{b}{\dal^{k+1}}\nonumber\\&+\scal{A(\prim^{k+1}-\prim)}{\bar{\prop}^{k}} +\scal{b^\delta}{\dal^{k+1}-\dal}-\scal{A\prim^{k+1}}{\dal^{k+1}}+\scal{A\prim^{k+1}}{\dal}\nonumber\\ =&\mathcal{L}(\prim^{k+1},\dal)-\mathcal{L}(\prim,\dal^{k+1})+\scal{b^{\delta}-b}{\dal^{k+1}-\dal}+\scal{A(\prim^{k+1}-\prim)}{\bar{\prop}^{k}-\dal^{k+1}}\label{e: lagrange Di1}\\ =&\mathcal{L}(\prim^{k+1},\dal)-\mathcal{L}(\prim,\dal^{k+1})+\scal{b^{\delta}-b}{\dal^{k+1}-\dal}+\scal{A(\prim^{k+1}-\prim)}{\prop^{k}-\dal^{k+1}}\nonumber\\ &+\scal{A(\prim^{k+1}-\prim)}{\dal^{k}-\prop^{k-1}}\nonumber\\ =&\mathcal{L}(\prim^{k+1},\dal)-\mathcal{L}(\prim,\dal^{k+1})+\scal{b^{\delta}-b}{\dal^{k+1}-\dal}+\scal{A(\prim^{k+1}-\prim)}{\prop^{k}-\dal^{k+1}}\nonumber\\ &+\scal{A(\prim^{k}-\prim)}{\dal^{k}-\prop^{k-1}}+\scal{A(\prim^{k+1}-\prim^{k})}{\dal^{k}-\prop^{k-1}}\nonumber\\ =&\mathcal{L}(\prim^{k+1},\dal)-\mathcal{L}(\prim,\dal^{k+1})+\scal{b^{\delta}-b}{\dal^{k+1}-\dal}+\scal{A(\prim^{k+1}-\prim)}{\prop^{k}-\dal^{k+1}}\nonumber\\ &+\scal{A(\prim^{k}-\prim)}{\dal^{k}-\prop^{k-1}}+\scal{\gamedio A(\prim^{k+1}-\prim^{k})}{\gamediomenos(\dal^{k}-\prop^{k-1})}.
        \label{e: lagrange D}
    \end{align}
   From \eqref{e: lagrange D} and \eqref{e: prestimate D} we obtain
\begin{align}
               &\mathcal{L}(\prim^{k+1},\dal)-\mathcal{L}(\prim,\dal^{k+1})+\frac{\|\prim^{k+1}-\prim^{k}\|_{\pSigma^{-1}}^2}{2} +\frac{\|\dal^{k+1}-\prop^{k}\|_{\Gamma^{-1}}^2}{2}+V(z^{k+1}-z)-V(z^{k}-z) \nonumber\\ \leq&-\scal{b^{\delta}-b}{\dal^{k+1}-\dal} -\scal{A(\prim^{k+1}-\prim)}{\prop^{k}-\dal^{k+1}} +\scal{A(\prim^{k}-\prim)}{\prop^{k-1}-\dal^{k}}\nonumber\\ &-\scal{\gamedio A(\prim^{k+1}-\prim^{k})}{\gamediomenos(\dal^{k}-\prop^{k-1})}\nonumber\\\leq&\delta\|\Gamma^{\frac{1}{2}}\|\|\dal^{k+1}-\dal\|_{\Gamma^{-1}}  -\scal{A(\prim^{k+1}-\prim)}{\prop^{k}-\dal^{k+1}} +\scal{A(\prim^{k}-\prim)}{\prop^{k-1}-\dal^{k}}\nonumber\\ &+\frac{\|\prim^{k+1}-\prim^{k}\|_{\pSigma^{-1}}^2}{2}+\|\gamedio A\pSigma^{\frac{1}{2}}\|^2\frac{\|\dal^{k}-\prop^{k-1}\|_{\Gamma^{-1}}^2}{2}\label{e: lagrangeI D}
    \end{align}
    Therefore we have that
\begin{align}
       &\mathcal{L}(\prim^{k+1},\dal)-\mathcal{L}(\prim,\dal^{k+1}) +\frac{\|\dal^{k+1}-\prop^{k}\|_{\Gamma^{-1}}^2}{2}-\|\gamedio A\pSigma^{\frac{1}{2}}\|^2\frac{\|\dal^{k}-\prop^{k-1}\|_{\Gamma^{-1}}^2}{2}\nonumber\\&+V(z^{k+1}-z)-V(z^{k}-z) \nonumber\\\leq&\delta\|\Gamma^{\frac{1}{2}}\|\|\dal^{k+1}-\dal\|_{\Gamma^{-1}}  -\scal{A(\prim^{k+1}-\prim)}{\prop^{k}-\dal^{k+1}} +\scal{A(\prim^{k}-\prim)}{\prop^{k-1}-\dal^{k}}
        \label{e: lagrangeII D}
    \end{align}
    Summing from $1$ to $N-1$ we obtain
    \begin{align}
         &\sum_{k=1}^{N-1}\left(\mathcal{L}(\prim^{k+1},\dal)-\mathcal{L}(\prim,\dal^{k+1}) \right)+\frac{\alpha}{2}\sum_{k=1}^{N-1}\|\dal^{k+1}-\prop^{k}\|_{\Gamma^{-1}}^2\nonumber\\&+V(z^{N}-z)+\|\gamedio A\pSigma^{\frac{1}{2}}\|^2\frac{\|\dal^{N}-\prop^{N-1}\|_{\Gamma^{-1}}^2}{2}\nonumber\\\leq&\delta\|\Gamma^{\frac{1}{2}}\|\sum_{k=1}^{N-1}\|\dal^{k+1}-\dal\|  -\scal{A(\prim^{N}-\prim)}{\prop^{N-1}-\dal^{N}}+\scal{A(\prim^{1}-\prim)}{\prop^{0}-\dal^{1}} +V(z^{1}-z) \nonumber\\\leq&\delta\|\Gamma^{\frac{1}{2}}\|\sum_{k=1}^{N-1}\|\dal^{k+1}-\dal\| +\|\gamedio A\pSigma^{\frac{1}{2}}\|^2\frac{\|\dal^{N}-\prop^{N-1}\|_{\Gamma^{-1}}^2}{2} +\frac{\|\prim^{N}-\prim\|_{\pSigma^{-1}}^2}{2} \nonumber\\&+\scal{A(\prim^{1}-\prim)}{\prop^{0}-\dal^{1}} +V(z^{1}-z)
        \label{e: lagrangeIII D}
    \end{align}
    Reordering \eqref{e: lagrangeIII D} we obtain
     \begin{align}
        &\sum_{k=1}^{N-1}\left(\mathcal{L}(\prim^{k+1},\dal)-\mathcal{L}(\prim,\dal^{k+1})\right)+\frac{\alpha}{2}\sum_{k=1}^{N-1}\|\dal^{k+1}-\prop^{k}\|_{\Gamma^{-1}}^2+\frac{\|\dal^{N}-\dal\|_{\Gamma^{-1}}^2}{2} \nonumber\\\leq& \delta\|\Gamma^{\frac{1}{2}}\|\sum_{k=1}^{N-1}\|\dal^{k+1}-\dal\|+\scal{A(\prim^{1}-\prim)}{\prop^{0}-\dal^{1}} +V(z^{1}-z).
        \label{e: lagrangeIV D}
    \end{align}
On the other hand, from \eqref{e: prestimate D}, \eqref{e: lagrange Di1}, and \eqref{e: Axsep D} we get
    \begin{align}
         \mathcal{L}(\prim^{1},\dal)-\mathcal{L}(\prim,\dal^{1})+\frac{\alpha}{2}\|\dal^{1}-\prop^{0}\|^2\leq& \delta\|\dal^{1}-\dal\|-\scal{A(\prim^{1}-\prim)}{\bar{\prop}^{0}-\dal^{1}}\nonumber\\ &V(z^{0}-z)-V(z^{1}-z)\label{e: firsstiteation}
    \end{align}
    Summing \eqref{e: lagrangeIV D} and \eqref{e: firsstiteation} yields
    \begin{align}
        &\sum_{k=1}^{N}\left(\mathcal{L}(\prim^{k},\dal)-\mathcal{L}(\prim,\dal^{k})\right)+\frac{\alpha}{2}\sum_{k=1}^{N}\|\dal^{k}-\prop^{k-1}\|_{\Gamma^{-1}}^2+\frac{\|\dal^{N}-\dal\|_{\Gamma^{-1}}^2}{2} \nonumber\\\leq& \delta\|\Gamma^{\frac{1}{2}}\|\sum_{k=1}^{N}\|\dal^{k}-\dal\|+V(z^{0}-z).
        \label{e: lagrangeVIII D}
    \end{align}
    Moreover, since $\dal^{k+1}-\prop^{k}=\Gamma(A\prim^{k+1}-b^{\delta})$
\begin{align}
       \|\dal^{k+1}-\proj^{k}\|_{\Gamma^{-1}}^2=&  \scal{\Gamma (A\prim^{k+1}-b^\delta)}{A\prim^{k+1}-b^\delta}\nonumber\\\geq&\frac{\|A\prim^{k+1}-b^\delta\|^2}{\|\Gamma^{-1}\|}\nonumber\\\geq&\frac{1}{\|\Gamma^{-1}\|}\left(\frac{\|A\prim^{k+1}-b\|^2}{2}-\|b^\delta-b\|^2\right)
        \label{e: Axsep D}
    \end{align}
     and from \eqref{e: lagrangeVIII D} and \eqref{e: Axsep D} we obtain
    \begin{align}
        &\sum_{k=0}^{N-1}\left(\mathcal{L}(\prim^{k+1},\dal)-\mathcal{L}(\prim,\dal^{k+1})\right)+\frac{\alpha}{4\|\Gamma^{-1}\|}\sum_{k=1}^{N}\|A\prim^{k}-b\|^2+\frac{\|\dal^{N}-\dal\|_{\Gamma^{-1}}^2}{2} \nonumber\\\leq& \delta\|\Gamma^{\frac{1}{2}}\|\sum_{k=1}^{N}\|\dal^{k}-\dal\|_{\Gamma^{-1}}+V(z^{0}-z)+\frac{\alpha N\delta^{2}}{2\|\Gamma^{-1}\|}
        \label{e: lagrangeV D}
    \end{align}
From \eqref{e: lagrangeVIII D} it follows that
    \begin{align}
     \|\dal^{k}-\dal\|_{\Gamma^{-1}}^2\leq2\delta\|\Gamma^{\frac{1}{2}}\|\sum_{k=1}^{N}\|\dal^{k}-\dal\|_{\Gamma^{-1}}+2V(z^{0}-z) ,
        \label{e: U bound D}
    \end{align}
    Apply \cite[Lemma A.1]{rasch2020inexact} to Equation \eqref{e: U bound D} with $\lambda_{k}:=2\delta\|\Gamma^{\frac{1}{2}}\|$ and $S_{k}:= 2V(z^{0}-z)$  to get
    \begin{align}
      \|\dal^{k}-\dal\|_{\Gamma^{-1}}\leq& N\|\Gamma^{\frac{1}{2}}\|\delta+\left(2 V(z^{0}-z)+\left(N\|\Gamma^{\frac{1}{2}}\|\delta\right)^2\right)^{\frac{1}{2}}\nonumber\\\leq& 2N\|\Gamma^{\frac{1}{2}}\|\delta+\left(2 V(z^{0}-z)\right)^{\frac{1}{2}}\label{e: ubound D}
    \end{align}
    Insert the previous in Equation \eqref{e: lagrangeVIII D}, to obtain
     \begin{align}
        \sum_{k=0}^{N-1}\left(\mathcal{L}(\prim^{k+1},\dal)-\mathcal{L}(\prim,\dal^{k+1})\right)\leq& 2\|\Gamma^{\frac{1}{2}}\|^{2} N^2\delta^{2}+N\|\Gamma^{\frac{1}{2}}\|\delta\left(2V(z^{0}-z)\right)^{\frac{1}{2}}+V(z^{0}-z)
        \label{e: lagrangeVI D}
    \end{align}
    and by \eqref{e: lagrangeV D} and  \eqref{e: ubound D} we have
 \begin{align}
        \sum_{k=1}^{N}\|A\prim^{k}-b\|^2\leq&\frac{4\|\Gamma^{-1}\|}{\alpha} &\left(2\|\Gamma^{\frac{1}{2}}\|^{2} N^2\delta^{2}+N\|\Gamma^{\frac{1}{2}}\|\delta\left(2V(z^{0}-z)\right)^{\frac{1}{2}}+V(z^{0}-z)+\frac{\alpha N\delta^{2}}{2\|\Gamma^{-1}\|}\right)
        \label{e: lagrangeVII D}
    \end{align}
     and both results follows from the Jensen's inequality.\end{proof}
     \subsection{Proof of Lemma \ref{L: Series Parallel}}\label{LP: Series Parallel}
     
\begin{proof} Note that every single equation in $C$ and $C_{\delta}$ 
    Let us first recall that
    \begin{align}
   P^{\delta}\hspace{2mm} \prim \mapsto    \prim+\frac{b^{\delta}_{j}-\scal{a_{j}}{\prim}}{\|a_{j}\|^2}a_{j}^{*}
\end{align}
     Note that the $j$-th equation of $C$ and $C_{\delta}$ are parallel. Then, for every $j\in [d]$ and $\bar{\prim}\in C$, we get 
\begin{align}
    \|P^{\delta}_{j}\prim-\bar{\prim}\|^2=&\|P_{j}\prim-\bar{\prim}\|^2+2\scal{P_{j}\prim-\bar{\prim}}{P^{\delta}_{j}\prim-P_{j}\prim}\nonumber\\&+\|P_{j}\prim-P^{\delta}_{j}\prim\|^2\nonumber\\=&\|P_{j}\prim-\bar{\prim}\|^2+\|P_{j}\prim-P^{\delta}_{j}\prim\|^2,
    \label{E:Pitagoras_1}
    \end{align}
   analogously, we have that
    \begin{align}
        \|\prim-\bar{\prim}\|^2=&\|\prim-P_{j}\prim\|^2+\|P_{j}\prim-\bar{\prim}\|^2.
        \label{E:Pitagoras_2}
    \end{align}
    It follows from \eqref{E:Pitagoras_1} and \eqref{E:Pitagoras_2} that
    \begin{align}
        \|P^{\delta}_{j}\prim-\bar{\prim}\|^2+\|\prim-P_{j}\prim\|^2=\|\prim-\bar{\prim}\|^2+\|P^{\delta}_{j}\prim-P_{j}\prim\|^2
   ,
    \end{align}
    hence
    \begin{align}
        \|P^{\delta}_{j}\prim-\bar{\prim}\|^2&\leq\|\prim-\bar{\prim}\|^2+\|P^{\delta}_{j}\prim-P_{j}\prim\|^2\nonumber\\ &\leq \|\prim-\bar{\prim}\|^2+\frac{(b^{\delta}_{j}-b_{j})^2}{\|a_{j}\|^2}\nonumber\\ &\leq \|\prim-\bar{\prim}\|^2+\frac{\delta^2}{\|a_{j}\|^2}\label{P: Paralellogram}
    \end{align}
    \begin{enumerate}
        \item  Since $T=P^{\delta}_{\beta_{l}}\circ\cdots\circ P^{\delta}_{\beta_{1}}$ it is clear that $C_{\delta}\subset\Fix T$ and  by induction we have that,
\begin{align}
\|T\prim-\bar{\prim}\|^2&\leq\|\prim-\bar{\prim}\|^2+e\delta^{2},
\end{align}
where $e=\frac{l}{\max\limits_{i=1,\dots,d}\|a_i\|}$.
\item The proof follows from the convexity of $\|\cdot\|^{2}$ which is obtained with $e=\frac{1}{\max\limits_{i=1,\dots,d}\|a_i\|}$.
\item Let $\bar{\prim}\in C$, by \eqref{d: averaged3}, we have
\begin{align}
    \|T\prim-\bar{\prim}\|^2=&\|\prim-\bar{\prim}\|^2-2\alpha\scal{\prim-\bar{\prim}}{A^{*}(A\prim-b^\delta)}+\alpha^2\|A^{*}(A\prim-b^\delta)\|^2\nonumber\\=&\|\prim-\bar{\prim}\|^2-2\alpha\scal{\prim-b}{A\prim-b^\delta}+\alpha^2\|A^{*}(A\prim-b^\delta)\|^2\nonumber\\ \leq&\|\prim-\bar{\prim}\|^2-2\alpha\scal{b^\delta-b}{A\prim-b^\delta}+\left(\alpha^2\|A\|^{2}-2\alpha\right)\|A\prim-b^{\delta}\|^2\label{e:Steepest descentalpha}
\end{align}
Now using the Young inequality with parameter $2-\alpha\|A\|^2$, we have that
\begin{align}
    \|T\prim-\bar{\prim}\|^2\leq&\|\prim-\bar{\prim}\|^2+\frac{\alpha}{2-\alpha\|A\|^2}\|b^{\delta}-b\|^2\nonumber\\\leq&\|\prim-\bar{\prim}\|^2+\frac{\alpha\delta^{2}}{2-\alpha\|A\|^2}.
\end{align} It remains to prove that if $C_{\delta}\neq0$ then $C_{\delta}\subset \Fix T$, which is clear from \eqref{d: averaged3}.
\item Let $\bar{\prim}\in C$ and $x\in\RR^p$,  if $A^{*}Ax=A^{*}b^{\delta}$ then \eqref{A: pitagoras error 1} immediately holds. Otherwise, we have
\begin{align}
    \|T\prim-\bar{\prim}\|^2=&\|\prim-\bar{\prim}\|^2-2\beta(x)\scal{\prim-\bar{\prim}}{A^{*}(A\prim-b^\delta)}+\beta(x)^2\|A^{*}(A\prim-b^\delta)\|^2\nonumber\\=&\|\prim-\bar{\prim}\|^2-2\beta(x)\scal{A\prim-b}{A\prim-b^\delta}+\beta(x)^2\|A^{*}(A\prim-b^\delta)\|^2\nonumber\\ =&\|\prim-\bar{\prim}\|^2-2\beta(x)\scal{b^{\delta}-b}{A\prim-b^\delta}-2\beta(x)\|A\prim-b^\delta\|^2\nonumber\\&+\beta(x)^2\|A^{*}(A\prim-b^\delta)\|^2\label{e:Steepest descentbeta}
\end{align}
Now using the Young inequality with parameter $2-\beta(x)\frac{\|A^{*}(A\prim-b^\delta)\|^2}{\|A\prim-b^\delta\|^2}$, we have that
\begin{align}
    \|T\prim-\bar{\prim}\|^2\leq&\|\prim-\bar{\prim}\|^2+\frac{\beta(x)}{2-\beta(x)\frac{\|A^{*}(A\prim-b^\delta)\|^2}{\|A\prim-b^\delta\|^2}}\|b^{\delta}-b\|^2\nonumber\\\leq&\|\prim-\bar{\prim}\|^2+M\delta^{2}.
\end{align} Finally, it is clear from \eqref{d: averaged4} that if $C_{\delta}\neq0$ then $C_{\delta}\subset \Fix T$.
\end{enumerate} 
\end{proof}
\printbibliography
\end{document}